\newtheorem{theorem}{Theorem}[section]
\newtheorem{claim}[theorem]{Claim}
\newtheorem{lemma}[theorem]{Lemma}
\newtheorem{proposition}[theorem]{Proposition}
\newtheorem{remark}[theorem]{Remark}
\newtheorem{definition}[theorem]{Definition}
\newcommand{\norm}[2][]{\ensuremath{\left\Vert #2 \right\Vert_{#1}}}
\renewcommand{\vec}[1]{\mathbf{#1}}
\begin{document}
\title{Last-Iterate Convergence: Zero-Sum Games and Constrained Min-Max Optimization}

\author{Constantinos Daskalakis\\MIT\\costis@csail.mit.edu
\and Ioannis Panageas\\MIT\\ioannis@csail.mit.edu}


\date{}
\maketitle
\begin{abstract}
Motivated by applications in Game Theory, Optimization, and Generative Adversarial Networks, recent work of Daskalakis et al~\cite{DISZ17} and follow-up work of Liang and Stokes~\cite{LiangS18} have established that a variant of the widely used Gradient Descent/Ascent procedure, called ``Optimistic Gradient Descent/Ascent (OGDA)'', exhibits last-iterate convergence to saddle points in {\em unconstrained} convex-concave min-max optimization problems. We show that the same holds true in the more general problem of {\em constrained} min-max optimization under a variant of the no-regret Multiplicative-Weights-Update method called ``Optimistic Multiplicative-Weights Update (OMWU)''. This answers an open question of Syrgkanis et al~\cite{SALS15}.

The proof of our result requires fundamentally different techniques from those that exist in no-regret learning literature and the aforementioned papers. We show that OMWU monotonically improves the Kullback-Leibler divergence of the current iterate to the (appropriately normalized) min-max solution until it enters a neighborhood of the solution. Inside that neighborhood we show that OMWU is locally (asymptotically) stable converging to the exact solution. We believe that our techniques will be useful in the analysis of the last iterate of other learning algorithms. 
\end{abstract}

\section{Introduction}
\label{sec:intro}

A central problem in Game Theory and Optimization is computing a pair of probability vectors $(\vec{x},\vec{y})$, solving
\begin{align}
\min_{\vec{y}\in \Delta_m}\max_{\vec{x} \in \Delta_n} \vec{x}^{\top} A \vec{y}, \label{eq:min-max problem}
\end{align}
where $\Delta_n \subset \mathbb{R}^n$ and $\Delta_m \subset \mathbb{R}^m$ are probability simplices, and $A$ is a $n\times m$ matrix. Von Neumann's celebrated minimax theorem informs us that
\begin{align}
\min_{\vec{y}\in \Delta_m}\max_{\vec{x} \in \Delta_n} \vec{x}^{\top} A \vec{y} = \max_{\vec{x} \in \Delta_n} \min_{\vec{y}\in \Delta_m} \vec{x}^{\top} A \vec{y}, \label{eq:minimax theorem}
\end{align}
and that all solutions to the LHS are solutions to the RHS, and vice versa. This result was a founding stone in the development of Game Theory. Indeed, interpreting $\vec{x}^{\top} A \vec{y}$ as the payment of the ``min player'' to the ``max player'' when the former selects a distribution $\vec{y}$ over columns and the latter selects a distribution $\vec x$ over rows of matrix $A$, a solution to~\eqref{eq:min-max problem} constitutes an equilibrium of the game defined by matrix $A$, called a ``minimax equilibrium'', a pair of randomized strategies such that neither player can improve their payoff by unilaterally changing their distribution.

Besides their fundamental value for Game Theory, it is known that~\eqref{eq:min-max problem} and~\eqref{eq:minimax theorem} are also intimately related to Linear Programming. It was shown by von Neumann that~\eqref{eq:minimax theorem} follows from strong linear programming duality. Moreover, it was suggested by Dantzig~\cite{dantzig1951proof} and recently proven by Adler~\cite{A13} that any linear program can be solved by solving some min-max problem of the form~\eqref{eq:min-max problem}. In particular, min-max problems of form~\eqref{eq:min-max problem} are exactly as expressive as min-max problems of the following form, which capture any linear program (by Lagrangifying the constraints):
\begin{align}
\min_{\vec{y} \ge 0}\max_{\vec{x} \ge 0} \left(\vec{x}^{\top} A \vec{y} + b^{\top} \vec{x} + c^{\top} \vec{y}\right). \label{eq:min-max problem positivity constraints}
\end{align}
Soon after the minimax theorem was proven and its connection to linear programming was forged, researchers proposed dynamics for solving min-max optimization problems by having the min and max players of~\eqref{eq:min-max problem} run a simple learning procedure in tandem. An early method, proposed by Brown~\cite{B51} and analyzed by Robinson~\cite{R51}, was fictitious play. Soon after, Blackwell's approachability theorem~\cite{B56} propelled the field of online learning, which lead to the discovery of several learning algorithms converging to minimax equilibrium at faster rates, while also being robust to adversarial environments, situations where one of the players of the game deviates from the prescribed dynamics; see e.g.~\cite{C06}. These learning methods, called ``no-regret'', include the celebrated multiplicative-weights-update method, follow-the-regularized-leader, and follow-the-perturbed-leader. Compared to centralized linear programming procedures the advantage of these methods is the simplicity of executing their steps, and their robustness to adversarial environments, as we just discussed.

\paragraph{Last vs Average Iterate Convergence.} Despite the extensive literature on no-regret learning, an unsatisfactory feature of known results is that min-max equilibrium is shown to be attained only in an average sense. To be precise, if $(\vec{x}^t,\vec{y}^t)$ is the trajectory of a no-regret learning method, it is usually shown that the average  ${1 \over t}\sum_{\tau \le t}{\vec{x}^{\tau}}^{\top} A \vec{y}^{\tau}$ converges to the optimal value of~\eqref{eq:min-max problem}, as $t \rightarrow \infty$. Moreover, if the solution to~\eqref{eq:min-max problem} is unique, then ${1 \over t}\sum_{\tau \le t}(\vec{x}^{\tau}, \vec{y}^{\tau})$ converges to the optimal solution. Unfortunately that does not mean that the last iterate $(\vec{x}^t,\vec{y}^t)$ converges to an optimal solution, and indeed it commonly diverges or enters a limit cycle. Furthermore, in the optimization literature, Nesterov \cite{smooth05} provides a method that can give pointwise convergence (i.e., convergence of the last iterate) to problem~\eqref{eq:min-max problem}\footnote{Nesterov showed that by optimizing $f_\mu (\vec{x}) := \mu\ln (\frac{1}{m} \sum_{j=1}^m e^{-\frac{1}{\mu}(A\vec{x})_j}),$ $g_\nu (\vec{x}) := \nu\ln (\frac{1}{n} \sum_{j=1}^n e^{\frac{1}{\nu}(A^{\top}\vec{y})_j})$ for $\mu = \Theta(\frac{\epsilon}{\log m}),$ $\nu = \Theta(\frac{\epsilon}{\log n})$ yields an $O(\epsilon)$ approximation to the problem problem~\eqref{eq:min-max problem}.}, however his algorithm is not a no-regret learning algorithm.
Recent work by Daskalakis et al~\cite{DISZ17} and Liang and Stokes~\cite{LiangS18} studies whether last iterate convergence can be established for no-regret learning methods in the simple unconstrained min-max problem of the form:
\begin{align}
\min_{\vec{y} \in \mathbb{R}^m}\max_{\vec{x} \in \mathbb{R}^n} \left(\vec{x}^{\top} A \vec{y} + b^{\top} \vec{x} + c^{\top} \vec{y}\right). \label{eq:min-max problem unconstrained}
\end{align}
For this problem, it is known that Gradient Descent/Ascent (GDA) is a no-regret learning procedure, corresponding to follow-the-regularized leader (FTRL) with $\ell_2^2$-regularization. As such, the average trajectory traveled by GDA converges to a min-max solution, in the afore-described sense. On the other hand, it is also known that GDA may diverge from the min-max solution, even in trivial cases such as $A={\rm I}, n=m=1, b=c=0$. Interestingly, \cite{DISZ17,LiangS18} show that a variant of GDA, called ``Optimistic Gradient Descent/Ascent (OGDA),\footnote{OGDA is tantamount to Optimistic FTRL with $\ell_2^2$-regularization, in the same way that GDA is tantamount to FTRL with $\ell_2^2$-regularization; see~e.g.~\cite{RS13}. OGDA essentially boils down to GDA with negative momentum.} exhibits last iterate convergence. Inspired by their theoretical result for the performance of OGDA in~\eqref{eq:min-max problem unconstrained}, Daskalakis et al.~\cite{DISZ17} even propose the use of OGDA for training Generative Adversarial Networks (GANs)~\cite{goodfellow2014generative}. Moreover, Syrgkanis et al.~\cite{SALS15} provide numerical experiments which indicate that the trajectories of Optimistic Hedge (variant of Hedge in the same way OGDA is a variant of GDA) stabilize (i.e., converge pointwise) as opposed to (classic) Hedge and they \textit{posed the question whether Optimistic Hedge actually converges pointwise.}

Motivated by the afore-described lines of work, and the importance of last iterate convergence for Game Theory and the modern applications of GDA-style methods in Optimization, our goal in this work is to generalize the results of~\cite{DISZ17,LiangS18} to the general min-max problem~\eqref{eq:min-max problem positivity constraints}, or equivalently~\eqref{eq:min-max problem}; indeed, we will focus on the latter, but our algorithms are readily applicable to the former as the two problems are equivalent~\cite{A13}. With the constraint that $(\vec{x}, \vec{y})$ should remain in  $\Delta_n \times \Delta_m$, GDA and OGDA are not applicable. Indeed, the natural GDA-style method for min-max problems in this case is the celebrated Multiplicative-Weights-Update (MWU) method, which is tantamount to FTRL with entropy-regularization. Unsurprisingly, in the same way that GDA suffers in the unconstrained problem~\eqref{eq:min-max problem unconstrained}, MWU exhibits cycling in the constrained problem~\eqref{eq:min-max problem} (a recent work is \cite{BP18} and was also shown empirically in~\cite{SALS15}). So it is natural for us to study instead its optimistic variant, ``Optimistic Multiplicative-Weights-Update (OMWU),'' (called Optimistic Hedge in~\cite{SALS15}) which corresponds to Optimistic FTRL with entropy-regularization, the equations of which are given in Section~\ref{sec:definitions}. Our main result is the following (restated as Theorem~\ref{thm:mainomwu} after Section~\ref{sec:definitions}) and answers an open question asked in~\cite{SALS15} as applicable to two player zero sum games:
\begin{theorem}[\textbf{Last-Iterate Convergence of OMWU}]\label{thm:informal theorem}  Whenever~\eqref{eq:min-max problem} has a unique optimal solution $(\vec{x}^*,\vec{y}^*)$, OMWU with appropriate choice of learning rate and initialized at the pair of uniform distributions $({1\over n}{\bf 1},{1 \over m}{\bf 1})$ exhibits last-iterate convergence to the optimal solution. That is, if $(\vec{x}^t,\vec{y}^t)$ are the vectors maintained by OMWU at step $t$, then $\lim_{t \to \infty}(\vec{x}^t,\vec{y}^t) = (\vec{x}^*,\vec{y}^*)$.
\end{theorem}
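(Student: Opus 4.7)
The approach follows the two-phase strategy outlined in the abstract: a global argument driving the iterates into an arbitrarily small neighborhood of $(\vec{x}^*, \vec{y}^*)$, followed by a local contraction argument inside that neighborhood. Throughout, I would exploit that OMWU updates multiplicatively in the log-domain, $\log \vec{x}^{t+1} = \log \vec{x}^t + \eta\bigl(2A\vec{y}^t - A\vec{y}^{t-1}\bigr) - (\log Z^x_t)\vec{1}$ and symmetrically for $\vec{y}$, so that Kullback--Leibler divergence from the target is the natural Lyapunov candidate; the uniform initialization keeps all iterates in the relative interior so this divergence is always finite.

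\paragraph{Phase 1: global KL descent.}
First I would design a Lyapunov function of the form
$$\Phi_t \;=\; D_{\rm KL}(\vec{x}^* \,\|\, \vec{x}^t) + D_{\rm KL}(\vec{y}^* \,\|\, \vec{y}^t) + \Psi(\vec{x}^t,\vec{x}^{t-1},\vec{y}^t,\vec{y}^{t-1}),$$
where $\Psi$ is a correction term absorbing the memory introduced by the optimistic lookahead. Computing $D_{\rm KL}(\vec{x}^* \,\|\, \vec{x}^{t+1}) - D_{\rm KL}(\vec{x}^* \,\|\, \vec{x}^t)$ via the update produces a linear term of the form $\eta\,{\vec{x}^*}^\top A(\vec{y}^{t-1} - 2\vec{y}^t)$ plus a log-partition term whose second-order expansion is a quadratic in $\|\vec{x}^{t+1}-\vec{x}^t\|$. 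The indifference/complementarity conditions at the unique solution -- $(A\vec{y}^*)_i = v^*$ on $\mathrm{supp}(\vec{x}^*)$ with strict inequality off-support, and the symmetric statement for $\vec{y}^*$ -- let me replace $A\vec{y}^t$ and $A^\top\vec{x}^t$ by $A(\vec{y}^t - \vec{y}^*)$ and $A^\top(\vec{x}^t - \vec{x}^*)$ modulo constants that cancel against the simplex constraint. I would then pick $\Psi$ to telescope the cross-terms between consecutive iterates, so that $\Phi_{t+1} - \Phi_t$ is dominated by a nonpositive quadratic in $\|\vec{x}^t-\vec{x}^{t-1}\|^2 + \|\vec{y}^t-\vec{y}^{t-1}\|^2$ together with a further nonpositive term measuring distance to $(\vec{x}^*,\vec{y}^*)$. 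For $\eta$ small enough this yields $\Phi_{t+1} \le \Phi_t$, and since $\Phi_t \ge 0$ with decrement bounded below by a positive constant outside any prescribed neighborhood of the solution, the trajectory must enter every such neighborhood.

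\paragraph{Phase 2: local contraction.}
Once $(\vec{x}^t,\vec{y}^t)$ sits inside a sufficiently small neighborhood, I would treat OMWU as a first-order dynamical system on the augmented state $(\vec{x}^t,\vec{y}^t,\vec{x}^{t-1},\vec{y}^{t-1})$ and linearize at the fixed point $(\vec{x}^*,\vec{y}^*,\vec{x}^*,\vec{y}^*)$. The Jacobian has a block structure where optimism contributes a $+2\eta$ block on the current iterate and a $-\eta$ block on the previous iterate, modulated by the simplex projection. Coordinates $i \notin \mathrm{supp}(\vec{x}^*)$ contract at a geometric rate strictly less than $1$ thanks to the strict-complementarity gap $v^* - (A\vec{y}^*)_i > 0$; coordinates on the support obey a linear recursion whose characteristic polynomial, for $\eta$ small, has all roots strictly inside the unit disk. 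This yields a local contraction in an appropriately weighted norm, and combined with Phase~1 it gives $(\vec{x}^t,\vec{y}^t) \to (\vec{x}^*,\vec{y}^*)$.

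\paragraph{Main obstacle.}
The hardest step is engineering the correction $\Psi$ so that $\Phi_t$ decreases at every single step, not merely over pairs of steps, while simultaneously handling coordinates off $\mathrm{supp}(\vec{x}^*)$ on which the KL-to-target term is blind to $\vec{x}^t_i$. For those coordinates one must separately argue that OMWU actively drives the corresponding mass to zero using the strict-complementarity gap; otherwise the off-support block of the local contraction fails and the global Lyapunov can stall. Fusing these two behaviors into a single monotone quantity -- on the log-barrier geometry of the simplex, with the second-order history dependence introduced by optimism -- is the technical crux of the theorem, and is precisely what distinguishes the analysis from the Euclidean OGDA argument of \cite{DISZ17,LiangS18}.
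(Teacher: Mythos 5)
The genuine gap is in your Phase 1. You posit a corrected Lyapunov function $\Phi_t = D_{\rm KL}(\vec{x}^*\|\vec{x}^t)+D_{\rm KL}(\vec{y}^*\|\vec{y}^t)+\Psi$ whose per-step decrement is ``bounded below by a positive constant outside any prescribed neighborhood of the solution,'' so that the trajectory enters every neighborhood of $(\vec{x}^*,\vec{y}^*)$. No such $\Phi$ can exist: OMWU (viewed as a map on quadruples) has many fixed points besides the Nash equilibrium --- every pure strategy pair, and more generally any $(\vec{x},\vec{y},\vec{x},\vec{y})$ in which each player's payoffs are equalized across their own support, e.g.\ equilibria of subgames supported off $\mathrm{Supp}(\vec{x}^*)\times\mathrm{Supp}(\vec{y}^*)$ padded with zeros. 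At such points consecutive iterates coincide, so both the KL decrement and any correction $\Psi$ built from consecutive-iterate differences vanish, even though the point is far from the optimum. This is precisely why the paper proves only the weaker Theorem \ref{thm:kl}: KL drops by $\Omega(\eta^3)$ unless the iterate is $O(\eta^{1/3})$-close in the sense of Definition \ref{def:close}, a stationarity notion which, as the paper explicitly warns, does \emph{not} imply proximity to the optimum. The idea you are missing is how the paper converts stationarity into proximity: since KL to $(\vec{x}^*,\vec{y}^*)$ has been decreasing from its initial value $\log(nm)$ (uniform start), every coordinate in the support of the optimum still carries mass at least $\tfrac{1}{2}\eta^{1/3}$ at the stopping time $T$ (Lemma \ref{lemma1}); together with $\eta^{1/3}$-closeness this makes the renormalized restriction of $(\vec{x}^T,\vec{y}^T)$ to the optimal support an $O(\eta^{1/3})$-approximate equilibrium of the subgame, and uniqueness plus compactness (Claim \ref{lem:claim3}) then yields $\ell_1$ proximity as $\eta\to0$ (Theorem \ref{thm:smalleta}). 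Your ``main obstacle'' paragraph does flag the off-support issue, but the fix you sketch --- globally driving off-support mass to zero via strict complementarity --- has no force far from the optimum, where the sign of $(A\vec{y}^t)_i-v$ is uncontrolled; without the KL-boundedness/support-mass mechanism the global phase can stall at a spurious near-stationary point and the argument does not close.

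Your Phase 2 is essentially the paper's local argument (linearize the quadruple map at $(\vec{x}^*,\vec{y}^*,\vec{x}^*,\vec{y}^*)$; off-support coordinates give eigenvalues $e^{\eta((A\vec{y}^*)_i-v)}<1$ by strict complementarity, which itself follows from uniqueness; the support block is handled through its characteristic polynomial), but two details you gloss over matter. First, the support-block eigenvalues have modulus tending to $1$ as $\eta\to0$, so smallness of $\eta$ is not what places the roots inside the unit disk. Second, excluding an eigenvalue exactly equal to $1$ is a separate step that again uses uniqueness: such an eigenvalue would produce a direction $(\tilde{\vec{x}},\tilde{\vec{y}})$ with $B\tilde{\vec{y}}=\vec{0}$, $B^{\top}\tilde{\vec{x}}=\vec{0}$ and zero coordinate sums, which would generate a second optimal solution. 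With these supplied, Phase 2 matches Theorem \ref{thm:omwucontraction}; the substantive missing piece remains the Phase 1 bridge described above.
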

\begin{remark} We note that the assumption about uniqueness of the optimal solution for problem~\eqref{eq:min-max problem} is generic in the following sense: Within the set of all zero-sum games, the set of zero-sum games with non-unique equilibrium has Lebesgue measure zero~\cite{BP18,EVD}. This implies that if $A$'s entries are sampled independently from some continuous distribution, then with probability one the min-max problem~\eqref{eq:min-max problem} will have a unique solution.
\end{remark}
Our paper provides \textit{two important messages}:
\begin{itemize}
\item It strengthens the intuition that optimism helps the trajectories of learning dynamics stabilize (e.g., Optimistic MWU vs MWU or Optimistic GDA vs GDA; as the papers of Syrgkanis et al~\cite{SALS15} and Daskalakis et al~\cite{DISZ17} also do).
    \item The techniques we use (typically appear in dynamical systems literature) to prove convergence for the last iterate, are fundamentally different from those commonly used to prove convergence of the time average of a learning algorithm.
\end{itemize}
\textbf{Notation:} Vectors in $\Delta_n, \Delta_m$ are denoted in boldface $\vec{x}, \vec{y}$. Time indices are denoted by superscripts. Thus, a time indexed vector $\vec{x}$ at time $t$ is denoted as $\vec{x}^t$. We use the letter $J$ to denote the Jacobian of a function (with appropriate subscript), $\vec{I}, \vec{0}, \vec{1}$ to denote the identity, zero matrix and all ones vector respectively with appropriate subscripts to indicate the size. Moreover, $(A\vec{y})_i$ captures $\sum_j A_{ij}y_j$. The support of $\vec{x}$ is denoted by $\textrm{Supp}(\vec{x})$. Finally we use $(\vec{x}^*,\vec{y}^*)$ to denote the optimal solution for the min-max problem (\ref{eq:min-max problem}) and $[n]$ to denote $\{1,...,n\}$.

\section{Preliminaries}
\label{sec:prelims}

\subsection{Definitions and facts}\label{sec:moredef}

\paragraph{Dynamical Systems.}
A recurrence relation of the form $\vec{x}^{t+1} = w(\vec{x}^{t})$ is a discrete time dynamical system, with update rule $w:\mathcal{S} \to \mathcal{S}$ where $\mathcal{S} = \Delta_n \times \Delta_m \times \Delta_n \times \Delta_m$ for our purposes.
The point $\vec{z}$ is called a \textit{fixed point} or \textit{equilibrium} of $w$ if $w(\vec{z}) = \vec{z}$. We will be interested in the following well known fact that will be used in our proofs.
\begin{proposition}[e.g.~\cite{G07}]\label{prop:contraction}
If the Jacobian of the update rule $w$\footnote{We assume $w$ is a continuously differential function.} at a fixed point $\vec{z}$ has spectral radius less than one, then there exists a neighborhood $U$ around $\vec{z}$ such that for all $\vec{x} \in U$, the dynamics converges to $\vec{z}$, i.e., $\lim_{n \to \infty} w^n(\vec{x}) = \vec{z}$. We call $w$ an asymptotic stable mapping in $U$.
\end{proposition}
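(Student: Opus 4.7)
The plan is to reduce the claim to the classical contraction mapping principle by passing from the Euclidean norm to a norm in which the linearization $J := J_w(\vec{z})$ is strictly contractive. The standard obstacle is that spectral radius less than one does not imply operator norm less than one in an arbitrary norm (e.g.\ a nilpotent Jordan block has $\rho = 0$ but arbitrarily large operator norm in the standard basis), so one must first construct a suitable norm.

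First I would invoke the following well-known fact from linear algebra (sometimes attributed to Householder): for any square matrix $J$ with spectral radius $\rho(J) < 1$ and any $\varepsilon > 0$, there exists a norm $\|\cdot\|_*$ on the ambient space such that the induced operator norm satisfies $\|J\|_* \le \rho(J) + \varepsilon$. The proof passes through the Jordan normal form of $J$: conjugating each Jordan block $\lambda I + N$ by the diagonal matrix $\mathrm{diag}(1,\delta,\delta^2,\dots)$ rescales the off-diagonal $1$'s to $\delta$'s, so for small enough $\delta$ the $\ell_\infty$ operator norm of the conjugated matrix is at most $|\lambda| + \varepsilon$. Pulling this back defines the required norm $\|\cdot\|_*$. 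Choose $\varepsilon$ so that $\alpha := \rho(J) + \varepsilon < 1$.

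Next I would use differentiability of $w$ at $\vec{z}$ to Taylor expand: $w(\vec{x}) - \vec{z} = w(\vec{x}) - w(\vec{z}) = J(\vec{x} - \vec{z}) + R(\vec{x})$, with $\|R(\vec{x})\|_* = o(\|\vec{x}-\vec{z}\|_*)$ as $\vec{x} \to \vec{z}$. Pick $\delta > 0$ with $\alpha + \delta < 1$, and choose a radius $r > 0$ small enough that $\|R(\vec{x})\|_* \le \delta \|\vec{x}-\vec{z}\|_*$ whenever $\|\vec{x}-\vec{z}\|_* \le r$. Let $U := \{\vec{x} : \|\vec{x}-\vec{z}\|_* \le r\}$. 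Then for every $\vec{x} \in U$,
\begin{equation*}
\|w(\vec{x}) - \vec{z}\|_* \le \|J\|_* \|\vec{x}-\vec{z}\|_* + \|R(\vec{x})\|_* \le (\alpha + \delta)\|\vec{x}-\vec{z}\|_* \le r,
\end{equation*}
so $U$ is forward-invariant under $w$ and $w$ restricted to $U$ is a contraction with modulus $\alpha + \delta < 1$ in the norm $\|\cdot\|_*$.

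Finally, iterating the above estimate gives $\|w^n(\vec{x})-\vec{z}\|_* \le (\alpha + \delta)^n \|\vec{x}-\vec{z}\|_*$, which tends to $0$ as $n \to \infty$ for every $\vec{x} \in U$. Since any two norms on a finite-dimensional space are equivalent, convergence in $\|\cdot\|_*$ is the same as convergence in the Euclidean norm, hence $\lim_{n \to \infty} w^n(\vec{x}) = \vec{z}$ as required. The uniqueness of the fixed point inside $U$ (not needed for the statement, but automatic from the contraction estimate) and the existence of $\vec{z}$ itself were assumed in the hypothesis. The only genuinely nontrivial step is the construction of the adapted norm $\|\cdot\|_*$; once that is in hand, the rest is a direct application of Taylor's theorem and iteration.
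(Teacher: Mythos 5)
Your proof is correct. Note that the paper does not actually prove this proposition: it is quoted as a standard fact from the dynamical-systems literature (cited to~\cite{G07}), so there is no in-paper argument to diverge from; what you wrote is precisely the textbook proof that the citation stands on, namely the construction of an adapted norm $\|\cdot\|_*$ with $\|J\|_*\le\rho(J)+\varepsilon<1$ via the Jordan form and a diagonal rescaling, followed by the first-order Taylor estimate and iteration, with norm equivalence converting $\|\cdot\|_*$-convergence back to Euclidean convergence. Two small points worth keeping in mind: since the Jacobian in this paper has genuinely complex eigenvalues, the Jordan form is complex, so one should either pass to the real Jordan form or observe that restricting the complex adapted norm to $\mathbb{R}^d$ still yields a real norm with the same operator-norm bound; and in the paper's application the fixed point may lie on the boundary of $\Delta_n\times\Delta_m\times\Delta_n\times\Delta_m$, so your ball must be intersected with the domain (the update rule extends smoothly to an open neighborhood, so your Taylor estimate is unaffected), which is exactly the convention the paper adopts in its footnote to Theorem~\ref{thm:omwucontraction}.
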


\subsection{OMWU Method}\label{sec:definitions}
Our main contribution is that the last iterate of OMWU converges to the optimal solution. The OMWU dynamics is defined as follows ($t \geq 1$):
\begin{equation}\label{eq:expOMWU}
\begin{array}{ll}
x_i^{t+1} = x_i^t \frac{e^{ 2\eta (A\vec{y}^t)_i -\eta  (A\vec{y}^{t-1})_i}}{\sum_{j=1}^n x_j^t e^{ 2\eta (A\vec{y}^t)_j -\eta  (A\vec{y}^{t-1})_j}} \textrm{ for all }i\in[n],\\
y_i^{t+1} = y_i^t \frac{e^{ -2\eta (A^{\top}\vec{x}^t)_i +\eta  (A^{\top}\vec{x}^{t-1})_i}}{\sum_{j=1}^m y_j^t e^{ -2\eta (A^{\top}\vec{x}^t)_j +\eta  (A^{\top}\vec{x}^{t-1})_j}} \textrm{ for all }i\in[m].
\end{array}
\end{equation}
Points $(\vec{x}^1,\vec{y}^1), (\vec{x}^0,\vec{y}^0)$ are the initial conditions and are given as input. We call $0<\eta<1$ the {\em stepsize} of the dynamics. It is more convenient to interpret OMWU dynamics as mapping a quadruple to quadruple
($(\vec{x}^{t},\vec{y}^t,\vec{x}^{t-1},\vec{y}^{t-1}) \to (\vec{x}^{t+1},\vec{y}^{t+1},\vec{x}^{t},\vec{y}^{t})$, see Section~\ref{sec:contraction} for the construction of the dynamical system).
\begin{remark}
Let $(\vec{x}^*,\vec{y}^*)$ be the optimal solution. We see that $(\vec{x}^*,\vec{y}^*,\vec{x}^*,\vec{y}^*)$ is a fixed point of the mapping. Furthermore, $\Delta_n \times \Delta_m \times \Delta_n \times \Delta_m$ is invariant under OMWU dynamics. For $t\geq 1$, if $x_i^{t}=0$ then $x_{i}$ remains zero for all times greater than $t$, and if it is positive, it remains positive (both numerator and denominator are positive) \footnote{Same holds for vector $\vec{y}$.}. In words, at all times the OMWU satisfies the non-negativity constraints and the renormalization factor (denominator) makes both $\vec{x},\vec{y}$'s coordinates sum up to one.
A last observation is that every fixed point of OMWU dynamics (mapping a quadruple to quadruple) has the form $(\vec{x},\vec{y},\vec{x},\vec{y})$ (two same copies). Equation (\ref{eq:mysystem}) shows how to express OMWU dynamics as a dynamical system.
\end{remark}

\subsection{Linear Variant of OMWU}
We provide the linear variant of OMWU dynamics (\ref{eq:expOMWU}) because we use it in some intermediate lemmas (appear in appendix).
\begin{equation}\label{eq:linOMWU}
\begin{array}{ll}
x_i^{t+1} = x_i^t \frac{1+ 2\eta (A\vec{y}^t)_i -\eta  (A\vec{y}^{t-1})_i}{\sum_{j=1}^n x_j^t (1+ 2\eta (A\vec{y}^t)_j -\eta  (A\vec{y}^{t-1})_j)} \textrm{ for all }i\in[n] ,\\
y_i^{t+1} = y_i^t \frac{1 -2\eta (A^{\top}\vec{x}^t)_i +\eta  (A^{\top}\vec{x}^{t-1})_i}{\sum_{j=1}^m y_j^t (1-2\eta (A^{\top}\vec{x}^t)_j +\eta  (A^{\top}\vec{x}^{t-1})_j)} \textrm{ for all }i\in[m].
\end{array}
\end{equation}
This dynamics is derived by considering the first order approximation of the exponential function. Stepsize $\eta$ in this case should be chosen sufficiently small so that both numerator and denominator are positive.

\subsection{More definitions and statement of our result}
\begin{definition}[\cite{MPPTV17}]\label{def:close} Assume $\alpha>0$. We call a point $(\vec{x},\vec{y}) \in \Delta_n \times \Delta_m$ $\alpha$-close if for each $i$ we have that $x_i \leq \alpha$ or $|\vec{x}^{\top}A\vec{y} - (A\vec{y})_i| \leq \alpha$ and for each $j$ it holds $y_j \leq \alpha$ or $|\vec{x}^{\top}A\vec{y} - (A^{\top}\vec{x})_j| \leq \alpha$.
\end{definition}
\begin{remark} Think of $\alpha$-close points as $\alpha$-approximate optimal solutions for min-max problems that are induced by {\em submatrices} of $A$ ($\alpha$-approximate stationary points). Moreover, if $(\vec{x},\vec{y})$ is $0$-close point does not necessarily imply $(\vec{x},\vec{y})$ is the optimal solution of problem (\ref{eq:min-max problem})!
\end{remark}

\begin{definition}[Approximate solution]\label{def:approximate} Assume $\epsilon>0$. We call a point $(\vec{x},\vec{y}) \in \Delta_n \times \Delta_m$ $\epsilon$-approximate (or $\epsilon$-approximate Nash equilibrium) if for all $\tilde{\vec{x}} \in \Delta_n$ we get that $\tilde{\vec{x}}^{\top}A\vec{y} \leq \vec{x}^{\top}A\vec{y} + \epsilon$ (max player deviates) and for all $\tilde{\vec{y}} \in \Delta_m$ we get that $\vec{x}^{\top}A\tilde{\vec{y}} \geq \vec{x}^{\top}A\vec{y} - \epsilon$ (min player deviates).
\end{definition}
\begin{remark} Think of $\epsilon$-approximate points as approximate optimal solutions to the min-max problem (\ref{eq:min-max problem}). Moreover, if $(\vec{x},\vec{y})$ is $0$-approximate then $(\vec{x},\vec{y})$ is the optimal solution of problem (\ref{eq:min-max problem}).
\end{remark}

\paragraph{Statement of our results.} We finish the preliminary section by stating formally the main result.

\begin{theorem}[OMWU converges]\label{thm:mainomwu} Let $A$ be a $n \times m$ matrix and assume that
\[
\min_{\vec{y} \in \Delta_m} \max_{\vec{x} \in \Delta_n} \vec{x}^{\top} A \vec{y}
\]
has a unique solution $(\vec{x}^*,\vec{y}^*)$. It holds that for $\eta$ sufficiently small (depends on $n,m,A$), starting from the uniform distribution, i.e.,  $(\vec{x}^1,\vec{y}^1) = (\vec{x}^0,\vec{y}^0)=({1\over n}{\bf 1},{1 \over m}{\bf 1})$, it holds \[\lim_{t \to \infty}(\vec{x}^t,\vec{y}^t) = (\vec{x}^*,\vec{y}^*),\] under OMWU dynamics. The stepsize $\eta$ is constant, i.e., does not vanish with time\footnote{Our proof also works if the starting points $(\vec{x}^1,\vec{y}^1), (\vec{x}^0,\vec{y}^0)$ are both in the interior of $\Delta_n \times \Delta_m$ and not necessarily uniform, however the choice of $\eta$ depends on the initial distributions as well and not only on $n,m,A$.}.
\end{theorem}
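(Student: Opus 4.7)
The plan is to exploit the dichotomy suggested in the abstract: a global phase in which a Kullback-Leibler-based potential decreases monotonically and pushes the iterates into a small neighborhood of $(\vec{x}^*,\vec{y}^*)$, followed by a local phase in which OMWU acts as a contraction. The key structural fact driving both phases is that the uniqueness of $(\vec{x}^*,\vec{y}^*)$ forces a \emph{strict complementary slackness} condition: for every $i \notin \textrm{Supp}(\vec{x}^*)$ one has $(A\vec{y}^*)_i < \vec{x}^{*\top}A\vec{y}^*$ strictly, and symmetrically for the $\vec{y}$-side. I would record this as a preliminary lemma, since the strict gap is the quantitative engine behind both arguments.

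For the global phase I would introduce a potential of the form
\[
\Phi^t \;=\; \mathrm{KL}(\vec{x}^*\,\|\,\vec{x}^t) + \mathrm{KL}(\vec{y}^*\,\|\,\vec{y}^t) + C\bigl(\|\vec{x}^t-\vec{x}^{t-1}\|^2 + \|\vec{y}^t-\vec{y}^{t-1}\|^2\bigr),
\]
where the quadratic correction absorbs the ``one-step memory'' inherent to optimism. Expanding $\Phi^{t+1}-\Phi^t$ along the OMWU update \eqref{eq:expOMWU}, replacing the exponential by its first-order surrogate \eqref{eq:linOMWU} plus a controlled Taylor remainder, and invoking optimality in the form $\vec{x}^{*\top}A\vec{y}^* \le (A\vec{y}^t)_i$ for $i\in\textrm{Supp}(\vec{x}^*)$, I would show $\Phi^{t+1}\le\Phi^t$ with strict decrease unless the current iterate is $\alpha$-close in the sense of Definition~\ref{def:close}. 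The uniform initialization keeps all iterates in the interior of the simplices so the KL terms are finite, and the strict-complementarity gap makes out-of-support coordinates contribute a negative term rather than interfere with the descent.

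For the local phase I would view OMWU as a continuously differentiable map $w$ on $\Delta_n\times\Delta_m\times\Delta_n\times\Delta_m$ and apply Proposition~\ref{prop:contraction} at the fixed point $\vec{z}^*=(\vec{x}^*,\vec{y}^*,\vec{x}^*,\vec{y}^*)$. The Jacobian $J_w(\vec{z}^*)$ block-decouples. Out-of-support coordinates linearize to diagonal entries of the form $\exp\bigl(\eta((A\vec{y}^*)_i - \vec{x}^{*\top}A\vec{y}^*)\bigr)<1$, bounded away from $1$ by strict complementarity. The in-support block has the companion-type form
\[
\begin{pmatrix} I + 2\eta B & -\eta B \\ I & 0 \end{pmatrix},
\]
where $B$ encodes the restriction of $A$ to the supports, projected onto the tangent spaces of the two simplices. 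Its characteristic polynomial factors coordinate-by-coordinate in the eigenbasis of $B$ as $\lambda^2 - (1+2\eta\mu)\lambda + \eta\mu = 0$; a direct computation shows all roots lie strictly inside the unit disk for $\eta$ small enough. Proposition~\ref{prop:contraction} then supplies a neighborhood $U$ of $\vec{z}^*$ on which $w$ contracts, and choosing $\alpha$ so that every $\alpha$-close iterate lies in $U$ stitches the two phases together.

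The hardest step is the spectral-radius calculation for the in-support block. For vanilla MWU the analogous analysis (no $-\eta B$ term) gives roots $1+\eta\mu$ of magnitude $\sqrt{1+\eta^2|\mu|^2}>1$, which is precisely the cycling/divergence observed in~\cite{BP18}; the entire benefit of optimism is that the $-\eta B$ correction drags these neutral eigenvalues to $|\lambda|^2 = 1 - \eta^2|\mu|^2 + O(\eta^4)$. Turning this into a proof requires treating the real and complex branches of the quadratic formula separately, carefully Taylor-expanding in $\eta$, and showing that the zero eigenvalues of $B$ (arising from redundant directions on the simplex tangent) do not produce a persistent $\lambda=1$. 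A secondary subtlety is matching the two phases quantitatively: Proposition~\ref{prop:contraction} only guarantees the existence of $U$, so I would need to make the threshold $\alpha$ explicit in terms of $A$, $n$, $m$, which is where the concrete uniform initialization is used to bound the relevant constants.
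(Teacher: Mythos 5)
Your two-phase plan matches the paper's architecture, and your local analysis is essentially the paper's: the reduced in-support block is a companion matrix whose nonzero eigenvalues solve $\lambda^2-(1+2\eta\mu)\lambda+\eta\mu=0$ with $\mu$ purely imaginary (the paper gets the same relation in the form $\frac{\lambda(\lambda-1)}{2\lambda-1}=i\sigma$ via a skew-symmetrizability lemma), and the out-of-support eigenvalues are $e^{\eta((A\vec{y}^*)_i-v)}<1$ by strict complementarity. Note, however, that the step you flag but do not carry out --- ruling out a persistent eigenvalue $\lambda=1$ coming from the kernel of the in-support operator --- is exactly where uniqueness of $(\vec{x}^*,\vec{y}^*)$ is used a second time in the paper: a kernel vector orthogonal to $\vec{1}$ on both sides would yield a perturbed optimal solution, a contradiction. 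These kernel directions are not ``redundant simplex directions''; they must be killed by uniqueness, not dismissed.

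The genuine gap is in how you stitch the phases together. You propose to ``choose $\alpha$ so that every $\alpha$-close iterate lies in $U$,'' but this is false for any $\alpha\ge 0$: $\alpha$-closeness in the sense of Definition~\ref{def:close} does \emph{not} imply proximity to $(\vec{x}^*,\vec{y}^*)$ (the paper remarks this explicitly; e.g.\ any pure strategy pair is $0$-close). The descent phase only tells you that when progress stalls at time $T$ the iterate is $O(\eta^{1/3})$-close, which by itself is compatible with being near a spurious ``stationary'' point of a subgame far from the equilibrium. The missing argument is the content of Lemma~\ref{lemma1}, Claim~\ref{lem:claim3} and Theorem~\ref{thm:smalleta}: because KL to the optimum was decreasing from its initial value $\log(nm)$ (this is where the uniform start is really used, not merely to keep iterates interior), every coordinate in $\mathrm{Supp}(\vec{x}^*)\cup\mathrm{Supp}(\vec{y}^*)$ retains mass at least $\tfrac12\eta^{1/3}$ at time $T$; restricting to the large-mass coordinates then makes $(\vec{x}^T,\vec{y}^T)$ an $O(\eta^{1/3})$-approximate Nash equilibrium of the corresponding subgame, and a compactness/uniqueness argument (Claim~\ref{lem:claim3}) plus the strict gap $(A\vec{y}^*)_i<v$ off the support converts this into $\ell_1$ proximity to $(\vec{x}^*,\vec{y}^*)$ as $\eta\to 0$, which is what actually places $(\vec{x}^{T+1},\vec{y}^{T+1},\vec{x}^T,\vec{y}^T)$ inside the contraction neighborhood. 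A smaller slip in your descent phase: optimality does not give the per-coordinate bound $(A\vec{y}^t)_i\ge \vec{x}^{*\top}A\vec{y}^*$ for $i\in\mathrm{Supp}(\vec{x}^*)$; only the aggregate inequalities $\vec{x}^{*\top}A(2\vec{y}^t-\vec{y}^{t-1})\ge v$ and $(2\vec{x}^t-\vec{x}^{t-1})^{\top}A\vec{y}^*\le v$ hold (Lemma~\ref{lem:ineqfixed}), and the paper's KL computation is organized around these.
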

We need to note that it is not clear from our theorem how small $\eta$ is and its dependence on the size of $A$. Moreover, OMWU has two phases (the phase where KL divergence decreases and the local asymptotic stability phase, see theorems below) where the stepsize is constant but it might change when we move from phase one to phase two. Nevertheless, our convergence result holds for constant stepsizes as opposed to the classic no-regret learning literature where $\eta$ scales like $\frac{1}{\sqrt{T}}$ after $T$ iterations. Another result we know of this flavor is about MWU algorithm on congestion games \cite{PPP17}.

\section{Last iterate convergence of OMWU}
In this section we show our main result (Theorem \ref{thm:mainomwu}), by breaking the proof into three key theorems. The first theorem says that KL divergence from the $t$-th iterate $(\vec{x}^t,\vec{y}^t)$ to the optimal solution $(\vec{x}^*,\vec{y}^*)$, i.e., (sum of KL divergences to be exact) \[\sum_i x_i^* \ln (x_i^*/x_i^t)+\sum_i y_i^* \ln (y_i^*/y_i^t),\] decreases with time $t \geq 2$ by at least a factor of $\eta^3$ per iteration, unless the iterate $(\vec{x}^t,\vec{y}^t)$ is $O(\eta^{1/3})$-close (see Definition \ref{def:close}). Moreover, provided that the stepsize $\eta$ is small enough, we can show the structural result that $(\vec{x}^t,\vec{y}^t)$ lies in a neighborhood of $(\vec{x}^*,\vec{y}^*)$ that becomes smaller and smaller as $\eta \to 0$. Finally, as long as OMWU dynamics has reached a small neighborhood around $(\vec{x}^*,\vec{y}^*)$, we show that the update rule of the dynamical system induced by OMWU is locally (asymptotically) stable (for maybe different choice of learning rate), and the last iterate convergence result follows. Formally we show:
\begin{theorem}[KL decreasing]\label{thm:kl} Let $(\vec{x}^*,\vec{y}^*)$ be the unique optimal solution of problem (\ref{eq:min-max problem}) and $\eta$ sufficiently small. Then
\[D_{KL}((\vec{x}^*,\vec{y}^*) || (\vec{x}^t,\vec{y}^t))\] is decreasing with time $t$ by (at least) $\Omega(\eta^3)$ unless $(\vec{x}^t,\vec{y}^t)$ is $O(\eta^{1/3})$-close.
\end{theorem}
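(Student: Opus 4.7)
The plan is to compute the one-step change
\[
 \Delta^t := D_{KL}((\vec{x}^*,\vec{y}^*) \| (\vec{x}^{t+1},\vec{y}^{t+1})) - D_{KL}((\vec{x}^*,\vec{y}^*) \| (\vec{x}^t,\vec{y}^t))
\]
directly from the OMWU update~(\ref{eq:expOMWU}) and show it is at most $-\Omega(\eta^3)$ whenever the iterate fails to be $O(\eta^{1/3})$-close. Setting $u_i := 2\eta(A\vec{y}^t)_i - \eta(A\vec{y}^{t-1})_i$ and $Z_x^t := \sum_i x_i^t e^{u_i}$, the update rewrites as $\ln(x_i^t/x_i^{t+1}) = \ln Z_x^t - u_i$, so summing against $x_i^*$ gives the exact identity $D_{KL}(\vec{x}^*\|\vec{x}^{t+1}) - D_{KL}(\vec{x}^*\|\vec{x}^t) = \ln Z_x^t - \langle \vec{x}^*, u\rangle$, and symmetrically for $\vec{y}$ with $v_j := -2\eta(A^\top\vec{x}^t)_j + \eta(A^\top\vec{x}^{t-1})_j$.

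Next I Taylor-expand $\ln Z_x^t = \langle\vec{x}^t,u\rangle + \tfrac{1}{2}\mathrm{Var}_{\vec{x}^t}(u) + O(\eta^3)$ (and likewise for $\ln Z_y^t$). The first-order parts combine into $\langle \vec{x}^t-\vec{x}^*, u\rangle + \langle\vec{y}^t-\vec{y}^*, v\rangle$. After expanding $u,v$ and invoking the saddle inequalities $\vec{x}^{*\top}A\vec{y}\ge V^*\ge \vec{x}^\top A\vec{y}^*$ on $\Delta_n\times\Delta_m$ (with $V^*$ the game value), these collapse to $-2\eta\,\delta^t + \eta\,\delta^{t-1}$, where $\delta^s := \vec{x}^{*\top}A\vec{y}^s - \vec{x}^{s\top}A\vec{y}^*\ge 0$ is the duality gap, plus a residual built only from $(\vec{x}^t-\vec{x}^{t-1})$ and $(\vec{y}^t-\vec{y}^{t-1})$. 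Because OMWU moves each distribution by $O(\eta)$ in $\ell_1$ per step, that residual is $O(\eta^2)$ and $|\delta^t - \delta^{t-1}| = O(\eta)$, so I obtain the baseline bound
\[
 \Delta^t \;\le\; -\eta\,\delta^t + \tfrac{1}{2}\bigl(\mathrm{Var}_{\vec{x}^t}(u) + \mathrm{Var}_{\vec{y}^t}(v)\bigr) + C\eta^2,
\]
where the two variances are themselves $O(\eta^2)$ in the worst case.

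The decisive step is to exploit the non-closeness hypothesis. By definition, not being $\eta^{1/3}$-close gives a witness coordinate---say, on the $\vec{x}$ side---with $x_i^t > c\eta^{1/3}$ and $|(A\vec{y}^t)_i - \vec{x}^{t\top}A\vec{y}^t| > c\eta^{1/3}$, which immediately yields $\mathrm{Var}_{\vec{x}^t}(A\vec{y}^t)\ge c^3\eta$ and, equivalently, a best-response improvement $\max_{\vec{x}\in\Delta_n}\vec{x}^\top A\vec{y}^t - \vec{x}^{t\top}A\vec{y}^t = \Omega(\eta^{2/3})$ for the max player. The main obstacle is then to translate this \emph{best-response improvement} (a statement about $\vec{x}^t,\vec{y}^t$ alone) into a lower bound on the \emph{duality gap} $\delta^t$, which also involves the distant optimum $(\vec{x}^*,\vec{y}^*)$; indeed, iterates lying on the best-response face of the saddle can have $\delta^t = 0$ while still being far from $\eta^{1/3}$-close. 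The bridge must come from the \emph{uniqueness} of $(\vec{x}^*,\vec{y}^*)$: by a compactness/continuity argument on the compact set $\Delta_n\times\Delta_m$, uniqueness of the saddle should imply that for $\eta$ small enough, any iterate with best-response improvement $\Omega(\eta^{2/3})$ has $\delta^t + \delta^{t-1} = \Omega(\eta)$, at which point the baseline bound collapses to $\Delta^t \le -\Omega(\eta^2) \le -\Omega(\eta^3)$. Establishing this structural ``best-response improvement $\Rightarrow$ duality gap'' lemma---which is precisely where the exponent $\eta^{1/3}$ emerges as the balance between the linear-in-$\eta$ saddle decrease and the $O(\eta^2)$ Taylor/residual error---is what I anticipate to be the heart of the proof. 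A cleaner route for the bookkeeping may go through the linear variant~(\ref{eq:linOMWU}), where $\ln Z_x^t$ is replaced by a simple rational function and the second-order remainders are easier to track, and then transfer to the exponential update via $e^z = 1 + z + O(z^2)$ uniformly for small $\eta$.
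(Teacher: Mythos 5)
There is a genuine gap, and it sits exactly where you place your hopes: the ``best-response improvement $\Rightarrow$ duality gap'' bridge is false, even under uniqueness. Take a game with a unique fully-mixed equilibrium (e.g.\ a Rock--Paper--Scissors-type matrix), let $\vec{x}^t=\vec{x}^*$ and let $\vec{y}^t$ be any full-support distribution far from $\vec{y}^*$: then $\delta^t=\vec{x}^{*\top}A\vec{y}^t-\vec{x}^{t\top}A\vec{y}^*=0$, yet the point is not $\alpha$-close for any small $\alpha$ (every $x_i^t$ is a constant and the deviations $|(A\vec{y}^t)_i-\vec{x}^{t\top}A\vec{y}^t|$ are constants). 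Iterates of MWU-type dynamics routinely pass near such configurations (one marginal near its equilibrium value, the other far), so no compactness argument can give you $\delta^t+\delta^{t-1}=\Omega(\eta)$ from non-closeness. Worse, in your baseline bound the variance terms enter with a \emph{positive} sign, and non-closeness makes them \emph{large} ($\Omega(\eta^3)$), so the very hypothesis you must exploit works against the inequality you wrote; the duality-gap term cannot rescue it.

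The paper's mechanism is different: the term $-2\eta\delta^t+\eta\delta^{t-1}$ is simply discarded as $\leq 0$ (Lemma~\ref{lem:ineqfixed}, which needs the small observation that $2\vec{x}^t-\vec{x}^{t-1}$ stays in the simplex), and the strict decrease comes from the cross term $\eta\left(\vec{x}^{t-1\,\top}A\vec{y}^t-\vec{x}^{t\,\top}A\vec{y}^{t-1}\right)$, which you bounded away as sign-unknown $O(\eta^2)$ noise. Lemmas~\ref{lem:ineqexponential}, \ref{lem:ineqexponential2} and \ref{eq:nolabel} use the multiplicative structure of the update (the identity $\vec{x}''^{\top}B(2\vec{w}-\vec{z})=\vec{x}^{\top}B(2\vec{w}-\vec{z})+(1-O(\eta))\,\mathrm{Var}$, i.e.\ $\mathbb{E}[\xi^2]=\mathbb{E}[\xi]^2+\mathbb{V}[\xi]$) to show this cross term is at most $-(1-O(\eta))\eta^2$ times the variance-type sums plus $O(\eta^3)$. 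This $-1\cdot\mathrm{Var}$ beats the $+\tfrac12\mathrm{Var}$ coming from your (correct) expansion of $\ln Z_x^t,\ln Z_y^t$, leaving a net $-\left(\tfrac12-O(\eta)\right)\eta^2\cdot\mathrm{Var}+O(\eta^3)$; non-closeness then gives $\mathrm{Var}=\Omega(\eta^{1/3}\cdot\eta^{2/3})$ and hence a decrease of $\Omega(\eta^3)$, with Lemma~\ref{lem:sameorder} handling the $O(\eta)$ difference between the optimistic combination $2(A\vec{y}^t)_i-(A\vec{y}^{t-1})_i$ and $(A\vec{y}^t)_i$. So the first-order bookkeeping in your proposal matches the paper, but the source of negativity is the optimism-induced cross term, not the duality gap, and without that observation the proof does not close.
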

\begin{theorem}[$\eta^{1/3}$-close implies close to optimum in $\ell_1$]\label{thm:smalleta} Assume that $(\vec{x}^*,\vec{y}^*)$ is unique optimal solution of the problem (\ref{eq:min-max problem}). Let $T$ (depends on $\eta$) be the first time KL divergence does not decrease by $\Omega(\eta^3)$.
It follows that as $\eta \to 0$, the $\eta^{1/3}$-close point $(\vec{x}^ {T},\vec{y}^{T})$ has distance from $(\vec{x}^*,\vec{y}^*)$ that goes to zero, i.e., $\lim_{\eta \to 0}\norm[1]{(\vec{x}^*,\vec{y}^*) - (\vec{x}^T,\vec{y}^T)} = 0$.
\end{theorem}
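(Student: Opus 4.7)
The plan is a compactness/contradiction argument combined with an LP-theoretic uniqueness step. Suppose the conclusion fails: there exist $\epsilon>0$ and a sequence $\eta_k\downarrow 0$ with associated stopping times $T_k$ such that $\|(\vec{x}^*,\vec{y}^*)-(\vec{x}^{T_k},\vec{y}^{T_k})\|_1\ge\epsilon$. By compactness of $\Delta_n\times\Delta_m$ I extract a subsequence along which $(\vec{x}^{T_k},\vec{y}^{T_k})\to(\hat{\vec{x}},\hat{\vec{y}})\ne(\vec{x}^*,\vec{y}^*)$. The $\eta_k^{1/3}$-close property passes to the limit (by pigeonhole on the two alternatives in Definition~\ref{def:close}): with $I=\textrm{Supp}(\hat{\vec{x}})$, $J=\textrm{Supp}(\hat{\vec{y}})$, and $\hat v=\hat{\vec{x}}^{\top}A\hat{\vec{y}}$, one has $(A\hat{\vec{y}})_i=\hat v$ for all $i\in I$ and $(A^{\top}\hat{\vec{x}})_j=\hat v$ for all $j\in J$, i.e., $(\hat{\vec{x}},\hat{\vec{y}})$ is $0$-close.

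Next I would establish the support inclusions $\textrm{Supp}(\vec{x}^*)\subseteq I$ and $\textrm{Supp}(\vec{y}^*)\subseteq J$ via a uniform KL bound. A single OMWU step from the uniform initialization rescales coordinates by multiplicative factors lying in $[e^{-3\eta\|A\|_\infty},e^{3\eta\|A\|_\infty}]$, hence for $\eta$ small one has $D_{KL}((\vec{x}^*,\vec{y}^*)\,\|\,(\vec{x}^2,\vec{y}^2))\le C$ for a constant $C=C(n,m,A)$. Since Theorem~\ref{thm:kl} makes the divergence nonincreasing for $t\ge 2$ up to time $T_k$, we get $D_{KL}((\vec{x}^*,\vec{y}^*)\,\|\,(\vec{x}^{T_k},\vec{y}^{T_k}))\le C$ uniformly in $k$. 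From $\sum_i x_i^*(-\ln x_i^{T_k})\le C+\ln n$ with nonnegative summands, each $x_i^{T_k}$ with $x_i^*>0$ is bounded below by $e^{-(C+\ln n)/x_i^*}$; passing to the limit yields $\hat x_i>0$ whenever $x_i^*>0$, and analogously for $\vec{y}$.

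With both support inclusions, $\hat{\vec{x}}^{\top}A\vec{y}^*=\sum_{j\in\textrm{Supp}(\vec{y}^*)}y_j^*(A^{\top}\hat{\vec{x}})_j=\hat v$ (since $\textrm{Supp}(\vec{y}^*)\subseteq J$), combined with the Nash bound $\hat{\vec{x}}^{\top}A\vec{y}^*\le v^*$, gives $\hat v\le v^*$; the symmetric computation gives $\hat v\ge v^*$, so $\hat v=v^*$. Expanding the equality $v^*=\sum_{i\in I}\hat x_i (A\vec{y}^*)_i$ against $(A\vec{y}^*)_i\le v^*$ forces $(A\vec{y}^*)_i=v^*$ for every $i\in I$. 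Uniqueness of $(\vec{x}^*,\vec{y}^*)$ implies strict complementary slackness (Goldman--Tucker) for the primal-dual LP pair solving~(\ref{eq:min-max problem}), so $\{i:(A\vec{y}^*)_i=v^*\}=\textrm{Supp}(\vec{x}^*)$; hence $I=\textrm{Supp}(\vec{x}^*)$, and analogously $J=\textrm{Supp}(\vec{y}^*)$.

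Finally, both $\hat{\vec{y}}|_J$ and $\vec{y}^*|_J$ are probability vectors on $J$ solving $A_{I,J}\vec{z}=v^*\vec{1}$. If they differed, then for small $\mu>0$ the perturbation $\vec{y}^*+\mu(\hat{\vec{y}}-\vec{y}^*)$ would be another probability vector supported on $J$ solving the same system, and strict CS on rows $i\notin I$ keeps $(A(\vec{y}^*+\mu(\hat{\vec{y}}-\vec{y}^*)))_i<v^*$; this yields a second min-optimal strategy, contradicting uniqueness. Hence $\hat{\vec{y}}=\vec{y}^*$, and symmetrically $\hat{\vec{x}}=\vec{x}^*$, contradicting $\|(\vec{x}^*,\vec{y}^*)-(\hat{\vec{x}},\hat{\vec{y}})\|_1\ge\epsilon$. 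I expect the main technical hurdle to be the uniform KL bound in the second step: it is precisely this uniformity that allows support containment to survive $\eta\to 0$ and trigger the LP-theoretic collapse, without which the limit is merely a $0$-close point and, per the remark after Definition~\ref{def:close}, need not be optimal.
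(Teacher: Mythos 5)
Your proof is correct, and while it shares the key ingredients with the paper's argument, it is organized along a genuinely different route. The paper first proves Lemma~\ref{lemma1} --- exactly your uniform-KL-bound step: boundedness of the divergence up to time $T$ forces every coordinate in $\textrm{Supp}(\vec{x}^*)\cup\textrm{Supp}(\vec{y}^*)$ to retain mass $\Omega(\eta^{1/3})$ --- and then, for each fixed small $\eta$, projects $(\vec{x}^T,\vec{y}^T)$ onto the coordinates of mass at least $\tfrac{1}{2}\eta^{1/3}$, shows the rescaled point is an $O(\eta^{1/3})$-approximate Nash equilibrium of the induced subgame, controls the off-support mass using the strict inequalities $(A\vec{y}^*)_i<v$ and $(A^{\top}\vec{x}^*)_j>v$ off the supports (Lemma C.3 of \cite{MPP18}), and finishes with Claim~\ref{lem:claim3}, a compactness statement about $\delta$-approximate equilibria of a game with a unique equilibrium. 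You instead take a limit of the iterates themselves along a putative bad sequence $\eta_k\to 0$, note that $\alpha$-closeness survives the limit to yield a $0$-close point, use the same KL-boundedness to get the support inclusions, and then combine strict complementarity with a small perturbation of $(\vec{x}^*,\vec{y}^*)$ toward the limit point to manufacture a second optimal solution, contradicting uniqueness. The shared ingredients are bounded KL $\Rightarrow$ support lower bounds, and uniqueness $\Rightarrow$ strict complementarity (which the paper imports from \cite{MPP18} via Farkas and you invoke via Goldman--Tucker); what your route buys is that it bypasses the subgame projection, the $\eta^{1/4}$-versus-$\eta^{1/3}$ separation, and Claim~\ref{lem:claim3}, giving a self-contained qualitative argument, whereas the paper's version is more quantitative for fixed $\eta$ (a $\delta(\epsilon)$-type statement about the iterate itself). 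One small indexing point: the KL divergence is guaranteed to decrease only for steps strictly before $T$, so strictly speaking your uniform bound holds at time $T_k-1$; transfer it to $T_k$ either via Lemma~\ref{lem:sameorder} (as the paper does inside Lemma~\ref{lemma1}) or by observing that the final step changes the divergence by at most $O(\eta^3)$ --- a trivial patch that does not affect the argument.
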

\begin{theorem}[OMWU is a locally converging] \label{thm:omwucontraction} Let $(\vec{x}^*,\vec{y}^*)$ be the unique optimal solution to the min-max problem (\ref{eq:min-max problem}). There exists a neighborhood $U:= U(\eta) \subset \Delta_n \times \Delta_m \times \Delta_n \times \Delta_m$ of $(\vec{x}^*,\vec{y}^*,\vec{x}^*,\vec{y}^*)$\footnote{Since $(\vec{x}^*,\vec{y}^*,\vec{x}^*,\vec{y}^*)$ might be on the boundary of $\Delta_n\times \Delta_m \times \Delta_n\times \Delta_m$, $U$ is the intersection of an open ball around $(\vec{x}^*,\vec{y}^*,\vec{x}^*,\vec{y}^*)$ with $\Delta_n\times \Delta_m \times \Delta_n\times \Delta_m$.} so that for all $(\vec{x}^1,\vec{y}^1,\vec{x}^0,\vec{y}^0) \in U$ we have that $\lim_{t \to \infty} (\vec{x}^t,\vec{y}^t,\vec{x}^{t-1},\vec{y}^{t-1}) = (\vec{x}^*,\vec{y}^*,\vec{x}^*,\vec{y}^*)$ under OMWU dynamics as defined in (\ref{eq:expOMWU}) and (\ref{eq:mysystem}) (Section \ref{sec:contraction}).
\end{theorem}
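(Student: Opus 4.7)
The plan is to invoke Proposition \ref{prop:contraction}: it suffices to show that, for sufficiently small stepsize $\eta$, the Jacobian of the OMWU update map at the fixed point $(\vec{x}^*,\vec{y}^*,\vec{x}^*,\vec{y}^*)$ has spectral radius strictly less than one, when viewed as acting on the appropriate tangent space. First I would recast the OMWU iteration as a self-map $F$ on the quadruple state space $\Delta_n\times\Delta_m\times\Delta_n\times\Delta_m$ by setting $F(\vec{x},\vec{y},\vec{x}',\vec{y}') = (G_x(\vec{x},\vec{y},\vec{y}'),\,G_y(\vec{x},\vec{y},\vec{x}'),\,\vec{x},\,\vec{y})$, where $G_x,G_y$ are the normalized multiplicative updates in \eqref{eq:expOMWU}. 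Verification that $(\vec{x}^*,\vec{y}^*,\vec{x}^*,\vec{y}^*)$ is a fixed point follows from the best-response/complementary-slackness characterization of the minimax equilibrium: on the support, the column payoffs $(A\vec{y}^*)_i$ and row payoffs $(A^\top\vec{x}^*)_j$ are constant (equal to the value of the game), so all exponents collapse and no re-weighting occurs.

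Before differentiating, I would deal with the possible boundary issue. Coordinates $i\notin\mathrm{Supp}(\vec{x}^*)$ (and symmetrically $j\notin\mathrm{Supp}(\vec{y}^*)$) satisfy $x_i^*=0$; by the remark in Section~\ref{sec:definitions} the multiplicative structure preserves zero coordinates, so the dynamics descends to the product of interiors of the support-simplices, where the map is smooth. On this reduced state space, I would remove the trivial eigenvalue $1$ arising from the invariant normalization $\sum_i x_i = \sum_j y_j = 1$ by restricting to the tangent space $T=\{v:\sum_i v_i=0\}^{\oplus 4}$, which is $F$-invariant to first order.

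Next I would compute $J_F$ at the fixed point. A direct differentiation of \eqref{eq:expOMWU}, using that at the fixed point all exponents are constant across coordinates in the support, yields the block form
\[
J_F \;=\; \begin{pmatrix} I+ \eta M_{xx} & 2\eta M_{xy} & 0 & -\eta M_{xy} \\ 2\eta M_{yx} & I+\eta M_{yy} & -\eta M_{yx} & 0 \\ I & 0 & 0 & 0 \\ 0 & I & 0 & 0 \end{pmatrix},
\]
where the $M_{\cdot\cdot}$ blocks are built from $A$ restricted to the support together with diagonal factors $\mathrm{diag}(\vec{x}^*), \mathrm{diag}(\vec{y}^*)$ coming from the derivatives of the softmax. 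The next task is to show $\mathrm{sp}(J_F)<1$ on the tangent space $T$ for small $\eta$. I would study the characteristic equation in block form: writing an eigenvector as $(u,v,u',v')$ with $u'=\lambda^{-1}u$, $v'=\lambda^{-1}v$, the condition $J_F(u,v,u',v')^\top=\lambda(u,v,u',v')^\top$ reduces to a quadratic matrix pencil $\lambda^2 I = \lambda(I+ \eta N) - \eta N'$ on the $(u,v)$-block, where $N$ and $N'$ are constructed from the blocks above. For $\eta\to 0$ the eigenvalues collapse to $\{0,1\}$; the key perturbative claim is that the double eigenvalue at $1$ splits into a pair of conjugate eigenvalues that move strictly inside the unit circle, with the displacement governed by the "optimistic" coefficient ($-\eta M_{xy}$ and $-\eta M_{yx}$) rather than by the vanilla MWU part.

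The main obstacle is this perturbative eigenvalue analysis: for vanilla MWU the analogous pencil produces eigenvalues exactly on the unit circle (the source of cycling), so the whole proof hinges on showing that the extra $-\eta$ terms, combined with the skew-symmetric coupling $M_{xy}=-M_{yx}^\top$ induced by the zero-sum structure, push all nontrivial roots strictly inside $\{|\lambda|<1\}$. Here uniqueness of $(\vec{x}^*,\vec{y}^*)$ is essential: it guarantees that $M_{xy}$ has full rank on the tangent space of the support-simplices (otherwise there would be an exact cycling direction), so the splitting is nondegenerate. Once this is established, the spectral radius bound holds for all sufficiently small $\eta$ by continuity of eigenvalues, Proposition \ref{prop:contraction} yields a neighborhood $U$ of $(\vec{x}^*,\vec{y}^*,\vec{x}^*,\vec{y}^*)$ on which OMWU converges to the fixed point, and the theorem follows.
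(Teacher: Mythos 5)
Your high-level plan (pass to the quadruple map, compute the Jacobian at $(\vec{x}^*,\vec{y}^*,\vec{x}^*,\vec{y}^*)$, show spectral radius $<1$, invoke Proposition~\ref{prop:contraction}) is the paper's plan, but two essential pieces are missing. First, your treatment of the boundary discards exactly the directions that must be controlled. You restrict the dynamics to the faces where $x_i=0$ for $i\notin\textrm{Supp}(\vec{x}^*)$ (and similarly for $\vec{y}$) and only differentiate on that reduced space. But the theorem claims convergence from a neighborhood $U$ inside the full product $\Delta_n\times\Delta_m\times\Delta_n\times\Delta_m$, and in the intended application the iterate entering $U$ has \emph{full} support (it comes from the KL-decreasing phase started at the uniform distribution); the transverse directions, along which the off-support mass must decay, are precisely what your reduced Jacobian says nothing about. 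There is also no smoothness obstruction forcing the restriction: the map \eqref{eq:mysystem} is smooth on the whole domain. The paper handles these directions explicitly: for $i\notin\textrm{Supp}(\vec{x}^*)$ the only nonzero partial derivative of $g_{1,i}$ at the fixed point is $\partial g_{1,i}/\partial x_i=e^{\eta\left((A\vec{y}^*)_i-v\right)}$, and this is $<1$ only because uniqueness forces strict complementarity $(A\vec{y}^*)_i<v$ (via Farkas' lemma, Lemma C.3 of \cite{MPP18}); this is a second, independent use of uniqueness that your argument never makes.

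Second, the finish of the spectral estimate is both missing and, as proposed, invalid. You correctly identify the crux (the skew-symmetric coupling from the zero-sum structure) but then want to conclude ``for all sufficiently small $\eta$ by continuity of eigenvalues.'' That cannot work: as $\eta\to0$ the relevant eigenvalues tend to $1$, i.e.\ to the unit circle, and their first-order displacement is purely imaginary, hence tangent to the circle; the strict decrease of the modulus is a second-order effect in $\eta$, so neither continuity nor first-order perturbation yields $\mathrm{sp}(J)<1$. The paper avoids perturbation theory entirely: it reduces the characteristic polynomial to the relation $\lambda(\lambda-1)/(2\lambda-1)=i\sigma$ where $i\sigma$ ranges over the eigenvalues of $J_{\mathrm{small}}$, which are purely imaginary because $J_{\mathrm{small}}\cdot\mathrm{diag}(D_x,D_y)$ is real skew-symmetric (Lemma~\ref{lem:skewsymmetrizable}); it then computes exactly $|\lambda|^2=\bigl(1\pm\sqrt{1-4\sigma^2}\bigr)/2<1$ whenever $\sigma\neq0$, and rules out $\sigma=0$ (an eigenvalue equal to $1$) by showing that a corresponding eigenvector with $\vec{1}^{\top}\tilde{\vec{x}}=\vec{1}^{\top}\tilde{\vec{y}}=0$, $B\tilde{\vec{y}}=\vec{0}$, $B^{\top}\tilde{\vec{x}}=\vec{0}$ would produce a second optimal solution, contradicting uniqueness. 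This last argument is the rigorous content behind your asserted ``$M_{xy}$ has full rank on the tangent space,'' which you state but do not prove (``otherwise there would be an exact cycling direction'' is not a proof). Until you supply (i) the off-support eigenvalue bound and (ii) an exact or second-order modulus computation for the on-support block, the spectral radius bound, and hence the appeal to Proposition~\ref{prop:contraction}, is not established.
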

Assuming these three theorems, our main result is straightforward.
\begin{proof}[Proof of Theorem \ref{thm:mainomwu}]  Let $\eta$ be sufficiently small ($\eta$ is the stepsize of the first phase of OMWU when KL decreases). If $(\vec{x}^1, \vec{y}^1) = ({1\over n}{\bf 1},{1 \over m}{\bf 1})$ (starting point is uniform) then an easy upper bound (by removing negative terms) on KL divergence from $(\vec{x}^1,\vec{y}^1)$ to $(\vec{x}^*,\vec{y}^*)$ is $-\sum_{i=1}^n x^*_{i} \log x^1_i + \sum_{i=1}^m y^*_{i} \log y^1_i = \log (nm)$. Therefore using Theorem \ref{thm:kl} we have that after at most $T$ that is  $O(\frac{\log (nm)}{\eta^3})$ steps, OMWU reaches a $O(\eta^{1/3})$-close point ($T$ is the first time so that KL divergence from current iterate to optimal solution $(\vec{x}^*,\vec{y}^*)$ has not decreased by at least a factor of $\eta^3$) or the KL divergence between the optimal solution and $(\vec{x}^{T},\vec{y}^{T})$ is $O(\eta^3)$ (KL divergence was decreasing by at least a factor of $\eta^3$ for all iterations until the
iterate reached a $\ell_1$ distance $O(\eta^3)$). In the latter case it follows $\norm[1]{(\vec{x}^*,\vec{y}^*) - (\vec{x}^T,\vec{y}^T)}^2$ is $O(\eta^3)$ and hence $(\vec{x}^T,\vec{y}^T)$ is $O(\eta^{3/2})$ in $\ell_1$ distance from the optimal solution, therefore for small $\eta$, $(\vec{x}^{T+1},\vec{y}^{T+1}, \vec{x}^T,\vec{y}^T)$ is in the neighborhood $U(\eta')$ that is needed for asymptotic stability (Theorem \ref{thm:omwucontraction}, for appropriate choice of $\eta'$). In the former case, by Theorem \ref{thm:smalleta} (for $\eta$ sufficiently small) it follows that $(\vec{x}^{T+1},\vec{y}^{T+1},\vec{x}^T,\vec{y}^T)$ is also in the neighborhood $U(\eta')$ that is needed for local asymptotic stability (Theorem \ref{thm:omwucontraction})\footnote{In both cases we used that iterate $(\vec{x}^T,\vec{y}^T)$ and $(\vec{x}^{T+1},\vec{y}^{T+1})$ have $\ell_1$ distance $O(\eta)$, this is Lemma~\ref{lem:sameorder}.}. The proof follows by Theorem \ref{thm:omwucontraction} as long as we change the stepsize from $\eta$ to $\eta'$ (in the second phase).
\end{proof}
In the next subsections we will provide the proofs to all three key theorems.

\subsection{KL decreases and OMWU reaches neighborhood}
In this subsection we argue about the proofs of Theorems \ref{thm:kl} and \ref{thm:smalleta}.
The inequality we managed to prove (see in the appendix the proof of Theorem \ref{thm:kl}) is the following:
\begin{equation}\label{eq:klgradient}
\begin{array}{lll}
D_{KL}((\vec{x}^*,\vec{y}^*) || (\vec{x}^{t+1},\vec{y}^{t+1})) -  D_{KL}((\vec{x}^*,\vec{y}^*) || (\vec{x}^{t},\vec{y}^{t}))\leq\\
 -\sum_{i=1}^n x_i^{t} \left((\frac{1}{2} - O(\eta))\eta^2 \left(2 (A\vec{y}^t)_i - 2 \vec{x}^{t \ \top}A\vec{y}^t -(A\vec{y}^{t-1})_i+ \vec{x}^{t \ \top}A\vec{y}^{t-1}\right)^2\right)
\\ -\sum_{i=1}^m y_i^{t} \left((\frac{1}{2} - O(\eta))\eta^2 \left(2 (A^{\top}\vec{x}^t)_i - 2 \vec{x}^{t \ \top}A\vec{y}^t -(A^{\top}\vec{x}^{t-1})_i+ \vec{x}^{t-1 \ \top}A\vec{y}^{t}\right)^2\right)+O(\eta^3).
 \end{array}
\end{equation}
The proof of the inequality is quite long, we choose to provide intuition and skip the details. We refer to the appendix for a proof. The inequality says that OMWU dynamics has a good progress (KL divergence decreases by at least a factor of $\eta^3$) as long as the current and previous iterate $(\vec{x}^t,\vec{y}^t), (\vec{x}^{t-1},\vec{y}^{t-1})$ are not $\alpha$-close for $\alpha$ chosen to be $O(\eta^{1/3})$. This situation appears a lot in gradient methods when the dynamics is close to a stationary point, the gradient of $f$ is small and the progress is small as opposed to the case where the gradient of $f$ is big and there is satisfying progress. The RHS of inequality (\ref{eq:klgradient}) captures the ``distance" from stationarity. Thus, as long as we are not close to a stationary point (i.e., $O(\eta^{1/3})$-close) in a time window between 1,2,...,k, KL divergence from current iterate ($k$-th) to the optimum has decreased by (at least) $\Omega(k\eta^3)$ compared to KL divergence from first iterate to the optimum.

Moreover, suppose that at some point of OMWU dynamics, KL divergence from current iterate to the optimum did not decrease by at least a factor of $\eta^3$ and let $T$ be the iteration this happened. As we have already argued, $(\vec{x}^T,\vec{y}^T)$ is a $O(\eta^{1/3})$-close point. We can show that as long as $\eta$ is sufficiently small, then for all $i,j$ in the support of $(\vec{x}^{*},\vec{y}^*)$, $x_i^T, y_j^T$ are (at least) $\Omega (\eta^{1/3})$ i.e., coordinates in the support of the optimum will have non negligible probability in $(\vec{x}^T,\vec{y}^T)$. Formally:

\begin{lemma}\label{lemma1} Let $i \in \textrm{Supp}(\vec{x}^*)$ and $j \in \textrm{Supp}(\vec{y}^*)$. It holds that $x_i^T \geq \eta^{1/4}$ and $y_i^T \geq  \eta^{1/4}$ as long as \[\eta^{1/4} \ll \min_{s \in \textrm{Supp}(\vec{x}^*)} \frac{1}{(nm)^{1/x_s^*}}, \min_{s \in \textrm{Supp}(\vec{y}^*)} \frac{1}{(nm)^{1/y_s^*}}.\]
\end{lemma}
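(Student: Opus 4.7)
The plan is to combine the monotonicity guarantee of Theorem~\ref{thm:kl} with the observation that every summand $-x_i^*\log x_i^T$ appearing in the KL divergence is non-negative (since $x_i^T \in (0,1]$), so any upper bound on the whole sum immediately becomes a pointwise upper bound on each $-\log x_i^T$. Concretely, I would (i) upper bound $D_{KL}((\vec{x}^*,\vec{y}^*)\,||\,(\vec{x}^T,\vec{y}^T))$ by roughly $\log(nm)$, (ii) convert this into a pointwise lower bound of the shape $x_i^T \gtrsim (nm)^{-1/x_i^*}$ for every $i \in \textrm{Supp}(\vec{x}^*)$, and (iii) plug in the hypothesis on $\eta^{1/3}$. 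The argument for $\vec{y}$ is completely symmetric.

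For step (i), note that at the uniform initialization
$$D_{KL}((\vec{x}^*,\vec{y}^*)\,||\,(\vec{x}^1,\vec{y}^1)) \;=\; \log(nm) - H(\vec{x}^*) - H(\vec{y}^*) \;\le\; \log(nm).$$
By definition of $T$, Theorem~\ref{thm:kl} guarantees that the KL divergence strictly decreases at every step $t<T$, while inequality~(\ref{eq:klgradient}) shows that the one-step change at step $T$ itself is at worst an additive $O(\eta^3)$. Telescoping and then taking $\eta$ small yields $D_{KL}((\vec{x}^*,\vec{y}^*)\,||\,(\vec{x}^T,\vec{y}^T)) \le \log(nm) - H(\vec{x}^*) - H(\vec{y}^*) + O(\eta^3)$.

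For step (ii), split the KL as $D_{KL}^x + D_{KL}^y$; both terms are non-negative, so $D_{KL}^x$ alone inherits the same upper bound. Rewriting $D_{KL}^x = -H(\vec{x}^*) - \sum_i x_i^* \log x_i^T$ gives
$$-\sum_{i \in \textrm{Supp}(\vec{x}^*)} x_i^* \log x_i^T \;\le\; \log(nm) - H(\vec{y}^*) + O(\eta^3) \;\le\; \log(nm) + O(\eta^3).$$
Every summand on the left is non-negative, hence bounded by the whole sum. So for each individual $i \in \textrm{Supp}(\vec{x}^*)$, $-x_i^* \log x_i^T \le \log(nm) + O(\eta^3)$, which rearranges to $x_i^T \ge (nm)^{-1/x_i^*}\cdot e^{-O(\eta^3)/x_i^*} \ge \tfrac{1}{2}(nm)^{-1/x_i^*}$ for $\eta$ small enough. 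The hypothesis $\eta^{1/3} \le (nm)^{-1/x_i^*}$ finishes step (iii): $x_i^T \ge \tfrac{1}{2}\eta^{1/3}$.

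The one delicate point I anticipate is making step (i) fully precise, since Theorem~\ref{thm:kl} literally addresses only the strict decrease for $t<T$ and is silent about the boundary step $T$, where the iterate has just become $O(\eta^{1/3})$-close. To handle this I would appeal to inequality~(\ref{eq:klgradient}) directly: its right-hand side has the form (a non-positive quadratic) $+\,O(\eta^3)$, so even when the quadratic part gives no useful decrease the $O(\eta^3)$ overshoot is harmless for the constant-type upper bound we actually need. Everything else is routine Gibbs-inequality bookkeeping, and the analogous chain of inequalities for the $\vec{y}$-block delivers the symmetric conclusion $y_j^T \ge \tfrac{1}{2}\eta^{1/3}$ for $j \in \textrm{Supp}(\vec{y}^*)$.
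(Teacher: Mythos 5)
Your proposal is correct and follows essentially the same route as the paper: bound the initial KL divergence by $\log(nm)$, use the monotone decrease up to time $T$, and convert the resulting KL bound into the pointwise estimate $x_i^T \gtrsim (nm)^{-1/x_i^*} \geq \eta^{1/3}$ (symmetrically for $\vec{y}$). The only cosmetic difference is in bridging the last step: the paper bounds $x_i^{T-1}$ and then invokes Lemma~\ref{lem:sameorder} ($|x_i^T-x_i^{T-1}|=O(\eta)$), whereas you absorb that step through the $O(\eta^3)$ additive slack in inequality~(\ref{eq:klgradient}); both are valid and cost only the harmless factor $\tfrac{1}{2}$ for $\eta$ sufficiently small.
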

\begin{proof} By definition of $T$, the KL divergence is decreasing for $2 \leq t \leq T-1$, thus $$D_{KL}((\vec{x}^*,\vec{y}^*) || (\vec{x}^{T-1},\vec{y}^{T-1})) < D_{KL}((\vec{x}^*,\vec{y}^*) || (\vec{x}^1,\vec{y}^1)).$$
Therefore $x_i^* \log \frac{1}{x_i^{T-1}} < \sum_i x_i^* \log \frac{1}{x^1_i}+\sum_i y_j^* \log \frac{1}{y^1_j} = \log (mn)$.
It follows that $x_i^T > 1/(mn)^{\frac{1}{x_i^*}} \geq \eta^{1/4}$ for $x_i^* >0$ ($i \in \textrm{Supp}(\vec{x}^*)$). Since $|x_i^T-x_i^{T-1}|$ is $O(\eta)$ (Lemma \ref{lem:sameorder}) the result follows.
Similarly, the argument works for $y_j^{T}$.
\end{proof}

Lemma \ref{lemma1} indicates that the stepsize $\eta$ might have to be exponentially small in the dimension (OMWU dynamics is slow when $\eta$ is very small).  We can now prove Theorem \ref{thm:smalleta}.
\begin{proof}[Proof of Theorem \ref{thm:smalleta}]
To prove our claim, we are going first to show that every strategy that is not in the support of the unique minmax solution $(\vec{x}^*,\vec{y}^*)$ should have probability mass $O(\eta^{1/4}).$ 

From Lemma \ref{lemma1}, the definition of $T$ and because $\eta^{1/4}\gg\eta^{1/3}$, we get that $|(A\vec{y}^T)_i - \vec{x}^{T \ \top}A\vec{y}^T|$ is $O(\eta^{1/3})$ for all $i$ in the support of $\vec{x}^*$ and $|(A^{\top}\vec{x}^T)_j - \vec{x}^{T \ \top}A\vec{y}^T|$ is $O(\eta^{1/3})$ for all $j$ in the support of $\vec{y}^*$. We consider $(\vec{w}^T , \vec{z}^T)$ to be the ``projection" of the point $(\vec{x}^T,\vec{y}^T)$ by removing all the coordinates that have probability mass less than $\frac{1}{2}\eta^{1/4}$ and rescale so that the coordinates sum up to one.

We restrict ourselves to the corresponding subproblem (submatrix); let's call the corresponding payoff matrix of the subproblem $\tilde{A}$. It is clear that $(\vec{w}^T , \vec{z}^T)$ is a $O(\eta^{1/4})$-approximate solution \footnote{By $\epsilon$-approximate solution we mean the $\epsilon$-approximate Nash equilibrium notion (additive), see Definition \ref{def:approximate}.} for the subproblem with payoff matrix $\tilde{A}$. Let $v = \tilde{\vec{x}}^{*\;\top} \tilde{A} \tilde{\vec{y}}^* = \vec{x}^{*\;\top}A\vec{y}^*$ be the minmax value and $(\tilde{\vec{x}}^*,\tilde{\vec{y}}^*)$ the minmax solution of the subproblem ($(\tilde{\vec{x}}^*,\tilde{\vec{y}}^*)$ has the same non-zero entries as vector $(\vec{x}^*,\vec{y}^*)$). By uniqueness of the optimal solution, we get that $(\tilde{A}\tilde{\vec{y}}^*)_i = v$ for all $i \in \textrm{Supp}(\tilde{\vec{x}}^*)$ and $(\tilde{A}\tilde{\vec{y}}^*)_i < v$ otherwise (check Lemma C.3 in paper \cite{MPP18} for a proof, where they use Farkas' lemma to show it, we use this fact later in Section \ref{sec:contraction}). Similarly $(\tilde{A}^{\top}\tilde{\vec{x}}^*)_j = v$ for the min player $\tilde{\vec{y}}$ if $j$ lies in the support of $\tilde{\vec{y}}^*$ and $(\tilde{A}^{\top}\tilde{\vec{x}}^*)_j > v$ otherwise. We choose $\eta$ so small that every $O(\eta^{1/4})$-approximate solution $(\vec{p},\vec{q})$ has the property that $(\tilde{A}\vec{q})_i \leq v - \eta^{1/5}$, $(\tilde{A}^{\top}\vec{p})_j \geq v + \eta^{1/5}$ for all $i \notin \textrm{Supp}(\tilde{\vec{x}}^*)$ and $j \notin \textrm{Supp}(\tilde{\vec{y}}^*)$ respectively (this is possible by continuity of the bilinear function and Claim \ref{lem:claim3} below). 

Hence we conclude that for $\eta$ small enough, the coordinates in the vector $(\vec{w}^T,\vec{z}^T)$ that are not in the support of the optimal solution $(\tilde{\vec{x}}^*,\tilde{\vec{y}}^*)$ $-$ since $\eta^{1/5}\gg\eta^{1/4}$ $-$ should have probability mass $O(\eta^{1/4})$ at time $T$.

\begin{claim}\label{lem:claim3} Let $(\vec{x}^*,\vec{y}^*)$ be the unique optimal solution to the problem (\ref{eq:min-max problem}). For every $\epsilon>0$, there exists an $\delta(\epsilon)>0$ so that for every $\delta$-approximate solution $(\vec{x},\vec{y})$ we get that $|x_i - x_i^*|<\epsilon$ for all $i \in [n]$. Analogously holds for player $\vec{y}$.
\end{claim}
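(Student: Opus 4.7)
The plan is to prove the claim by contradiction, leveraging compactness of the simplex together with the uniqueness hypothesis. Concretely, I would fix $\epsilon>0$ and suppose, for contradiction, that no $\delta>0$ works; then for each $k \in \mathbb{N}$ I could pick a $(1/k)$-approximate Nash equilibrium $(\vec{x}^{(k)}, \vec{y}^{(k)}) \in \Delta_n \times \Delta_m$ with $\max_i |x_i^{(k)} - x_i^*| \geq \epsilon$. (The case where the failure occurs on the $\vec{y}$-side is handled by a symmetric argument.)

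Next, I would use compactness of the product of simplices $\Delta_n \times \Delta_m$ to extract a convergent subsequence, say $(\vec{x}^{(k_j)}, \vec{y}^{(k_j)}) \to (\vec{x}', \vec{y}')$ for some $(\vec{x}', \vec{y}') \in \Delta_n \times \Delta_m$. Since the map $(\vec{x}, \vec{y}) \mapsto \vec{x}^\top A \vec{y}$ is continuous and for any fixed $\tilde{\vec{x}} \in \Delta_n$ the map $\vec{y} \mapsto \tilde{\vec{x}}^\top A \vec{y}$ is continuous, I can pass to the limit in the pair of defining inequalities of Definition~\ref{def:approximate} with $\delta = 1/k_j \to 0$. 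This gives $\tilde{\vec{x}}^\top A \vec{y}' \leq \vec{x}'^\top A \vec{y}'$ for every $\tilde{\vec{x}} \in \Delta_n$, and symmetrically $\vec{x}'^\top A \tilde{\vec{y}} \geq \vec{x}'^\top A \vec{y}'$ for every $\tilde{\vec{y}} \in \Delta_m$. Hence $(\vec{x}', \vec{y}')$ is a $0$-approximate solution, i.e., an exact minimax equilibrium.

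By the uniqueness hypothesis, this forces $(\vec{x}', \vec{y}') = (\vec{x}^*, \vec{y}^*)$. But then $\vec{x}^{(k_j)} \to \vec{x}^*$ coordinatewise, contradicting the standing assumption that $\max_i |x_i^{(k)} - x_i^*| \geq \epsilon$ for all $k$ along the subsequence. This contradiction establishes existence of the desired $\delta(\epsilon)>0$.

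The only mildly delicate step is the passage to the limit showing that a limit point of $(1/k)$-approximate Nash equilibria is itself an exact equilibrium; that step, however, is routine given continuity of the bilinear payoff and compactness of the simplex, so there is no real obstacle beyond writing the inequalities carefully. The crucial structural ingredient is the uniqueness of $(\vec{x}^*,\vec{y}^*)$, without which the argument would only yield convergence to \emph{some} optimal solution rather than to $(\vec{x}^*,\vec{y}^*)$ itself.
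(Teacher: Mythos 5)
Your proposal is correct and follows essentially the same route as the paper's own proof: contradiction, compactness of $\Delta_n \times \Delta_m$ to extract a convergent subsequence of $\delta_k$-approximate equilibria, passing to the limit in the approximate-equilibrium inequalities to get an exact equilibrium, and invoking uniqueness to contradict the assumed $\epsilon$-separation. The only difference is that you spell out the limit-passing step explicitly, which the paper leaves implicit.
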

\begin{proof} We will prove this by contradiction. Assume there is an $\epsilon$ that violates this statement. We choose a sequence $\delta_k$ so that $\lim_{k\to \infty}\delta_k = 0$ and also there is a sequence $(\vec{x}_k,\vec{y}_k)$ of $\delta_k$-approximate Nash equilibrium with $|x_{k,i}-x^*_i| \geq \epsilon$
for some strategy $i$. Since $\Delta_n\times \Delta_m$ is compact and the sequence above is bounded, there is a convergent subsequence. The limit of the convergent subsequence is a Nash equilibrium by definition of $\delta$-approximate (Definition \ref{def:approximate}). By uniqueness it follows that the $i$-th coordinate of the convergent sequence must converge to $x_i^*$, hence we reached a contradiction.
\end{proof}

Therefore, if we restrict to the subgame with payoff matrix $\tilde{A}$, the projected vector $(\vec{w}^T,\vec{z}^T)$ is a $O(\eta^{1/3})$-approximate minmax solution of the subgame.

From Claim \ref{lem:claim3}, as $\eta \to 0$ it follows that the $\ell_1$ distance (any distance suffices) between $(\vec{w}^T,\vec{z}^T)$ and the Nash equilibrium $(\tilde{\vec{x}}^*,\tilde{\vec{y}}^*)$ of the subgame goes to zero. Since the minmax solution of the subgame is the effectively the same as the optimal solution of the original game we get that as $\eta \to 0$,
 $(\vec{x}^T,\vec{y}^T)$ reaches $(\vec{x}^*,\vec{y}^*)$. In particular, since $\norm[1]{(\vec{x}^{T+1},\vec{y}^{T+1})- (\vec{x}^{T},\vec{y}^T)}$ is $O(\eta)$ (see Lemma \ref{lem:sameorder}) there exists a $\eta$ small so that $(\vec{x}^{T+1},\vec{y}^{T+1},\vec{x}^T,\vec{y}^T)$ is inside the necessary neighborhood
$U$ of $(\vec{x}^*,\vec{y}^*,\vec{x}^*,\vec{y}^*)$ that gives local (asymptotic) stability (Theorem \ref{thm:omwucontraction}).
\end{proof}

\subsection{Proving local convergence}\label{sec:contraction}
The purpose of this section is to prove Theorem \ref{thm:omwucontraction}. First of all, we assume that the stepsize $\eta>0$ is some fixed constant (sufficiently small, not necessarily the same stepsize as in the first phase where KL divergence decreases). To show asymptotic stability of OMWU dynamics in a neighborhood of the optimal solution $(\vec{x}^*,\vec{y}^*)$, we first construct a dynamical system that captures OMWU. Moreover, we prove that the Jacobian of the update rule of that particular dynamical system computed at the optimal solution, has spectral radius less than one. This suffices to prove asymptotic stability (see Proposition \ref{prop:contraction}). As a result, as long as OMWU reaches a small neighborhood of $(\vec{x}^*,\vec{y}^*,\vec{x}^*,\vec{y}^*)$, it converges pointwise (last iterate convergence) to it\footnote{Since the dynamical system is from a quadruple to a quadruple, it is a neighborhood of $(\vec{x}^*,\vec{y}^*,\vec{x}^*,\vec{y}^*)$.}. Below we provide the update rule $g$ of the dynamical system, which consists of 4 components:
\begin{equation}\label{eq:mysystem}
\begin{array}{llll}
g(\vec{x},\vec{y},\vec{z},\vec{w}) := (g_1(\vec{x},\vec{y},\vec{z},\vec{w}),g_2(\vec{x},\vec{y},\vec{z},\vec{w}),g_3(\vec{x},\vec{y},\vec{z},\vec{w}),g_4(\vec{x},\vec{y},\vec{z},\vec{w})),\\
g_{1,i}(\vec{x},\vec{y},\vec{z},\vec{w}) := (g_1(\vec{x},\vec{y},\vec{z},\vec{w}))_ i := x_i \frac{e^{2\eta(A\vec{y})_i - \eta(A\vec{w})_i}}{\sum_{t} x_t e^{2\eta(A\vec{y})_t - \eta(A\vec{w})_t}} \textrm{ for all }i \in [n],\\
g_{2,i}(\vec{x},\vec{y},\vec{z},\vec{w}) := (g_2(\vec{x},\vec{y},\vec{z},\vec{w}))_i := y_i \frac{e^{-2\eta(A^{\top}\vec{x})_i + \eta(A^{\top}\vec{z})_i}}{\sum_{t} y_t e^{-2\eta(A^{\top}\vec{x})_t + \eta(A^{\top}\vec{z})_t}} \textrm{ for all }i \in [m],\\
g_3(\vec{x},\vec{y},\vec{z},\vec{w}):= \vec{I}_{n\times n}\vec{x},\\
g_4(\vec{x},\vec{y},\vec{z},\vec{w}):= \vec{I}_{m\times m}\vec{y}.
\end{array}
\end{equation}
It is not hard to check that \[(\vec{x}_{t+1},\vec{y}_{t+1},\vec{x}_{t},\vec{y}_{t}) = g(\vec{x}_{t},\vec{y}_{t},\vec{x}_{t-1},\vec{y}_{t-1}),\]
so $g$ captures exactly the dynamics of OMWU (\ref{eq:expOMWU}). The equations of the Jacobian of $g$ can be found in the appendix (see Section \ref{sec:equationsJacobian}).

\paragraph{Spectral analysis the Jacobian of OMWU at the optimal solution.}
The rest of the section constitutes the proof of Theorem \ref{thm:omwucontraction}. Assume $v = \vec{x}^{* \ \top}A \vec{y}^*$, i.e., $v$ is the value of the bilinear function $\vec{x}^{\top}A\vec{y}$ at the optimal solution. We will analyze the Jacobian computed at $(\vec{x}^*,\vec{y}^*,\vec{x}^*,\vec{y}^*)$\footnote{See also Equations~(\ref{eq:Jacobian2}) of the Jacobian computed at $(\vec{x}^*,\vec{y}^*,\vec{x}^*,\vec{y}^*)$.}.

Assume $i \notin \textrm{Supp}(\vec{x}^*)$, then \[\frac{\partial g_{1,i}}{\partial x_i} = \frac{e^{\eta(A\vec{y}^*)_i} }{\sum x^*_t e^{\eta(A\vec{y}^*)_t} } = \frac{e^{\eta(A\vec{y}^*)_i} }{e^{\eta v}}\] and all other partial derivatives of $g_{1,i}$ are zero, thus $\frac{e^{\eta(A\vec{y}^*)_i} }{e^{\eta v}}$ is an eigenvalue of the Jacobian computed at $(\vec{x}^*,\vec{y}^*,\vec{x}^*,\vec{y}^*)$. Moreover because of uniqueness of the optimal solution, it holds that $\frac{e^{\eta(A\vec{y}^*)_i} }{e^{\eta v}}<1$ because $(A\vec{y}^*)_i - v <0$ (check Lemma C.3 in \cite{MPP18} for a proof, where they use Farkas' Lemma to show it). Similarly, it holds for $j \notin \textrm{Supp}(\vec{y}^*)$ that $\frac{\partial g_{2,j}}{\partial y_j} = \frac{e^{-\eta(A^{\top}\vec{x}^*)_j }}{e^{-\eta v}}<1$ (again by C.3 in \cite{MPP18} it holds that $(A\vec{x}^*)_j - v >0$) and all other partial derivatives of $g_{2,j}$ are zero, hence $\frac{e^{-\eta(A^{\top}\vec{x}^*)_j }}{e^{-\eta v}}$ is an eigenvalue of the Jacobian computed at the optimal solution.

Let $D_x$ be the diagonal matrix of size $|\textrm{Supp}(\vec{x}^*)| \times |\textrm{Supp}(\vec{x}^*)|$ that has on the diagonal the nonzero entries of $\vec{x}^*$ and similarly we define $D_y$ of size $|\textrm{Supp}(\vec{y}^*)| \times |\textrm{Supp}(\vec{y}^*)|$. We set $k_1 = |\textrm{Supp}(\vec{x}^*)|, k_2 = |\textrm{Supp}(\vec{y}^*)|$ and $k = k_1+k_2$. Let $\vec{x}',\vec{y}'$ be the optimal solution to the min-max problem with payoff matrix the corresponding submatrix of payoff matrix $A$ (denoted by $B$) after removing the rows/columns which correspond to the coordinates that are not in the support of the unique optimal solution $(\vec{x}^*,\vec{y}^*)$\footnote{Note that $(\vec{x}',\vec{y}')$ should be the \textit{unique} optimal solution to the min-max problem with payoff matrix $B$.}. We consider the submatrix $\tilde{J}$ of the Jacobian matrix that is created by removing rows and columns of the corresponding coordinates that are not in the support of optimum (for the variables $\vec{x}$ and $\vec{y}$, these are exactly $n+m-k$). It is clear from above, that the Jacobian of OMWU has eigenvalues with absolute value less than one iff $\tilde{J}$ has as well. After also removing the rows (and the corresponding columns) that have only zero entries (these are exactly $n+m-k$, result zero eigenvalues and correspond
to variables $\vec{z}$ and $\vec{w}$) the resulting submatrix (denote it by $J$) boils down to the following:
\begin{equation}\label{eq:JOMWU}
J = \left(\begin{array}{cccc}
\vec{I}_{k_1 \times k_1} - D_x\vec{1}_{k_1}\vec{1}^{\top}_{k_1} & 2\eta D_x (B - v\vec{1}_{k_1}\vec{1}^{\top}_{k_2})  & \vec{0}_{k_1 \times k_1} & -\eta D_x (B - v\vec{1}_{k_1}\vec{1}^{\top}_{k_2})
\\ 2\eta D_y (v\vec{1}_{k_2}\vec{1}^{\top}_{k_1}-B^{\top}) & \vec{I}_{k_2 \times k_2} - D_y\vec{1}_{k_2}\vec{1}^{\top}_{k_2} & -\eta D_y (v\vec{1}_{k_2}\vec{1}^{\top}_{k_1}-B^{\top}) & \vec{0}_{k_2 \times k_2}
\\ \vec{I}_{k_1 \times k_1} & \vec{0}_{k_1 \times k_2} & \vec{0}_{k_1 \times k_1} & \vec{0}_{k_1 \times k_2}
\\ \vec{0}_{k_2 \times k_1} & \vec{I}_{k_2 \times k_2} & \vec{0}_{k_2 \times k_1} & \vec{0}_{k_2 \times k_2}
\end{array}\right).
\end{equation}
It is clear that $(\vec{1}_{k_1},\vec{0}_{k_2},\vec{0}_{k_1},\vec{0}_{k_2})$, $(\vec{0}_{k_1},\vec{1}_{k_2},\vec{0}_{k_1},\vec{0}_{k_2})$ are left eigenvectors with eigenvalues zero and thus any right eigenvector $(\tilde{\vec{x}},\tilde{\vec{y}},\tilde{\vec{z}},\tilde{\vec{w}})$ with nonzero eigenvalue has the property that $\tilde{\vec{x}}^{\top}\vec{1}_{k_1} = 0$ and $\tilde{\vec{y}}^{\top}\vec{1}_{k_2} = 0$.
Hence every nonzero eigenvalue of the matrix above is an eigenvalue of the matrix below:
\begin{equation}\label{eq:Jnew}
J_{\textrm{new}} = \left(\begin{array}{cccc}
\vec{I}_{k_1 \times k_1} & 2\eta D_x B  & \vec{0}_{k_1 \times k_1} & -\eta D_x B
\\ -2\eta D_y B^{\top} & \vec{I}_{k_2 \times k_2} & \eta D_y B^{\top} & \vec{0}_{k_2 \times k_2}
\\ \vec{I}_{k_1 \times k_1} & \vec{0}_{k_1 \times k_2} & \vec{0}_{k_1 \times k_1} & \vec{0}_{k_1 \times k_2}
\\ \vec{0}_{k_2 \times k_1} & \vec{I}_{k_2 \times k_2} & \vec{0}_{k_2 \times k_1} & \vec{0}_{k_2 \times k_2}
\end{array}\right).
\end{equation}
Let $p(\lambda)$ be the characteristic polynomial of the matrix (\ref{eq:Jnew}). After row/column operations it boils down to
\begin{equation}\label{eq:small}
(-1)^k\textrm{det}\left(\begin{array}{cc}
\lambda(1-\lambda) \vec{I}_{k_1 \times k_1} & (2\lambda - 1)\eta D_x B
\\ -\eta (2\lambda - 1)D_y B^{\top} & \lambda(1-\lambda) \vec{I}_{k_2 \times k_2}
\end{array}\right) = (1-2\lambda)^k q\left(\frac{\lambda(\lambda-1)}{2\lambda-1}\right),
\end{equation}
where $q(\lambda)$ is the characteristic polynomial of
\begin{equation}\label{eq:small}
J_{\textrm{small}}=\left(\begin{array}{cc}
\vec{0}_{k_1 \times k_1} & \eta D_x B
\\ -\eta D_y B^{\top} & \vec{0}_{k_2 \times k_2}
\end{array}\right).
\end{equation}
Observe that \[
J_{\textrm{small}} \cdot
\left(\begin{array}{cc}D_x & \vec{0}_{k_1 \times k_2}
\\ \vec{0}_{k_2 \times k_1} & D_y
\end{array}\right) \textrm{ is real skew symmetric,}\]
and hence by Lemma \ref{lem:skewsymmetrizable}, $J_{\textrm{small}}$ has eigenvalues of the form\footnote{We denote $i = \sqrt{-1}$.} $\pm i \eta\tau$ with $\tau \in \mathbb{R}$ (i.e., imaginary eigenvalues; we include $\eta$ in the expression to conclude that $\sigma := \eta \tau$ can be sufficiently small in absolute value). We conclude that any nonzero eigenvalue $\lambda$ of the matrix $J$ should satisfy the equation $\frac{\lambda(\lambda-1)}{2\lambda-1} = i \sigma$ for some small in absolute value $\sigma \in \mathbb{R}$. Finally we get that
\[\lambda = \frac{1+2i\sigma\pm \sqrt{1-4\sigma^2}}{2}.\]
We compute the square of the magnitude of $\lambda$ and we get $|\lambda|^2 = \frac{2-4\sigma^2 \pm 2\sqrt{1-4\sigma^2}+4\sigma^2}{4} = \frac{1\pm\sqrt{1-4\sigma^2}}{2} < 1$ unless $\sigma =0$ (i.e., $\tau=0$). If $\sigma=0$, it means that $J_{\textrm{new}}$ has an eigenvalue which is equal to one. Assume that $(\tilde{\vec{x}},\tilde{\vec{y}},\tilde{\vec{x}},\tilde{\vec{y}})$ is the corresponding right eigenvector, it holds that $B\tilde{\vec{y}} = \vec{0}$ and $B^{\top}\tilde{\vec{x}} = \vec{0}$. Assume also that there exists an eigenvalue that is equal to one in the original matrix $J$. It follows that $\vec{1}_{k_2}^{\top}\tilde{\vec{y}}=0$ and $\vec{1}_{k_1}^{\top}\tilde{\vec{x}}=0$. It holds that $\tilde{\vec{x}} = \vec{0}_{k_1}$ and $\tilde{\vec{y}} = \vec{0}_{k_2}$ otherwise $(\vec{x}',\vec{y}')+ t (\tilde{\vec{x}},\tilde{\vec{y}})$ would be another optimal solution (for the min-max problem with payoff matrix $B$; by padding zeros to the vector, we could create another optimal solution for the original min-max problem with payoff matrix $A$) for small enough $t$. We reached contradiction because we have assumed uniqueness.  Hence all the eigenvalues of $J$ are less than 1, i.e., the mapping is (locally) asymptotic stable mapping and the proof is complete.

\section{Experiments}\label{sec:experiments}
The purpose of our experiments is primarily to understand how the speed of convergence of OMWU dynamics (\ref{eq:expOMWU}) scales with the size of matrix $A$. Moreover, for $A$ of fixed size, we are interested in how the speed of convergence scales with the error of the output of OMWU dynamics. By error we mean the $\ell_1$ distance between the last iterate of OMWU and the optimal solution.

For the former case, we fix the error to be $0.1$ and we run OMWU for $n = 25 , 50 , ..., 250$ where the input matrix $A$ has size $n \times n$ with entries i.i.d random variables sampled from uniform $[-1,1]$. We output the number of iterations OMWU needs starting from uniform $(\frac{1}{n},...,\frac{1}{n})$ to reach a solution that is at most $0.1$ away from optimal in $\ell_1$ distance. We note that we computed the optimal solutions using LP-solvers.

For the latter case, we fix $n=50$ and we consider the error $\epsilon$ to be $\{0.5, 0.25, 0.0625, 0.015625 , 0.007812\}$. Starting from uniform distribution, we count the number of iterations to reach error $\epsilon$. The stepsize $\eta$ is fixed at $0.01$ at all times. The results can be found in the figure below (Figure \ref{fig:big}). If we had to guess, it seems that the relation between dimension and iterations is between linear and quadratic (i.e., OMWU dynamics has roughly cubic-quartic running time in $n$ if we count the cost of each iteration as quadratic) and the dependence between error $\epsilon$ and iterations $t$ seems like $t$ is inverse polynomial in $\epsilon$.

We note the importance of stepsize $\eta$. $\eta$ must be sufficiently small for our proofs to work. If $\eta$ is chosen to be big, then OMWU might not converge (might cycle, we observed such behavior in experiments). On the other hand, the smaller $\eta$ is chosen, the smaller the progress of OMWU dynamics (see the inequality claim for KL divergence) and hence the slower the dynamics.
\begin{figure}[h!]

\begin{minipage}{1.0\textwidth}
\centering     
\subfigure[In the $x$ axis we have the number of rows of a square matrix $A$ and on $y$ axis the number of iterations of OMWU. This figure captures how the number of iterations depends on the dimensionality of the min-max problem.]{\label{fig:GDAstable2} \includegraphics[width=0.47\linewidth]{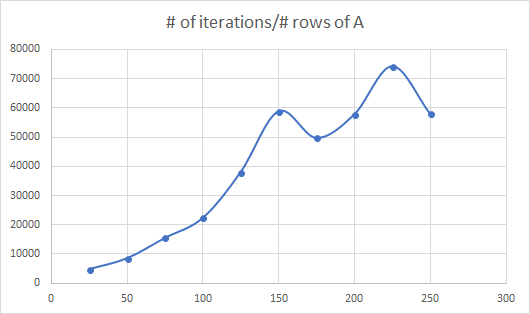}}
\;
\subfigure[In the $x$ axis we have the number of iterations of OMWU and on $y$ axis the $\ell_1$ distance from the optimal solution. This figure captures how the number of iterations scales with the error.]{\label{fig:GDAstable3}
  \includegraphics[scale=0.52]{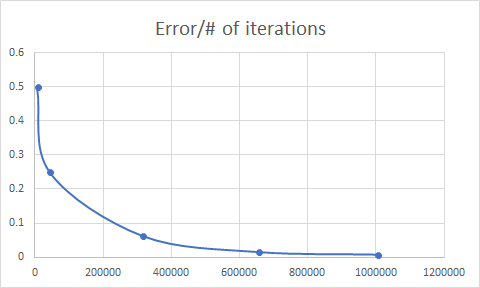}}
\end{minipage}\label{fig:big}
\end{figure}




\section{Conclusion}
In this paper we showed that a no-regret algorithm called Optimistic Multiplicative Weights Update (OMWU) converges pointwise to a Nash equilibrium in two player zero sum games (See also a concurrent work to ours \cite{Metal}, in which the authors provide a pointwise result about other dynamics, using different techniques). Our analysis is novel and does not follow the standard approaches of the literature of no-regret learning. We believe that our techniques can be useful in the analysis of other learning algorithms with no provable guarantees of pointwise convergence.

One interesting open question is to show that OMWU algorithm converges in polynomial time in $n,m$ (for proper choice of stepsize $\eta$) and find exact rates of convergence. Another possible future direction is to generalize our results about OMWU beyond the bilinear setting.
\section*{Acknowledgements}
We are grateful to Vasilis Syrgkanis for pointing out a mistake in Lemma B.4 (of the previous version, Lemma \ref{eq:nolabel} in this version) and suggesting how to fix it.

\bibliographystyle{plain}
\bibliography{sigproc3}
\newpage
\appendix

\section{Equations of the Jacobian of OMWU dynamics}\label{sec:equationsJacobian}
\subsection{Equations computed at point $(\vec{x},\vec{y},\vec{z},\vec{w})$}
Set $S_x = \sum_{t=1}^n x_t e^{2\eta(A\vec{y})_t - \eta(A\vec{w})_t}$, $S_y = \sum_{t=1}^m y_t e^{-2\eta(A^{\top}\vec{x})_t + \eta(A^{\top}\vec{z})_t}$ and let $i,j$ be arbitrary indexes ($g_{1,i}$ captures the $i$-th coordinate of function $g_1$ etc),
\begin{equation}\label{eq:Jacobian}
\begin{array}{llll}
\frac{\partial g_{1,i}}{\partial x_i} = \frac{e^{2\eta(A\vec{y})_i - \eta(A\vec{w})_i}}{S_x}-x_i\frac{\left (e^{2\eta(A\vec{y})_i - \eta(A\vec{w})_i}\right)^2}{S_x^2} \textrm{ for all }i\in [n],\\
\frac{\partial g_{1,i}}{\partial x_j} = -x_i e^{2\eta(A\vec{y})_j - \eta(A\vec{w})_j} \cdot \frac{e^{2\eta(A\vec{y})_i - \eta(A\vec{w})_i}}{S_x^2} \textrm { for all }i\in [n], j \in [n] \textrm{ and }j\neq i,\\
\frac{\partial g_{1,i}}{\partial y_j} = x_ie^{2\eta(A\vec{y})_i - \eta(A\vec{w})_i} \cdot \frac{2\eta A_{ij} S_x - 2\eta \sum_{t} A_{tj}x_t e^{2\eta(A\vec{y})_t - \eta(A\vec{w})_t}}{S_x^2} \textrm { for all }i\in [n], j \in [m],\\
\frac{\partial g_{1,i}}{\partial z_j} = 0 \textrm { for all }i,j \in [n],\\
\frac{\partial g_{1,i}}{\partial w_j} = x_ie^{2\eta(A\vec{y})_i - \eta(A\vec{w})_i} \cdot \frac{-\eta A_{ij} S_x + \eta \sum_{t} A_{tj}x_t e^{2\eta(A\vec{y})_t - \eta(A\vec{w})_t}}{S_x^2} \textrm { for all }i \in [n],j \in [m],\\
\frac{\partial g_{2,i}}{\partial y_i} = \frac{e^{-2\eta(A^{\top}\vec{x})_i + \eta(A^{\top}\vec{z})_i}}{S_y}-y_i\frac{\left (e^{-2\eta(A^{\top}\vec{x})_i + \eta(A^{\top}\vec{z})_i}\right)^2}{S_y^2} \textrm{ for all }i \in [m],\\
\frac{\partial g_{2,i}}{\partial y_j} = -y_i e^{-2\eta(A^{\top}\vec{x})_j + \eta(A^{\top}\vec{z})_j} \cdot \frac{e^{-2\eta(A^{\top}\vec{x})_i + \eta(A^{\top}\vec{z})_i}}{S_y^2} \textrm { for all }i\in [m], j \in [m] \textrm{ and }j\neq i,\\
\frac{\partial g_{2,i}}{\partial x_j} = y_ie^{-2\eta(A^{\top}\vec{x})_i + \eta(A^{\top}\vec{z})_i} \cdot \frac{-2\eta A^{\top}_{ij} S_y + 2\eta \sum_{t} A^{\top}_{tj}y_t e^{-2\eta(A^{\top}\vec{x})_t + \eta(A^{\top}\vec{z})_t}}{S_y^2} \textrm { for all }i\in [m], j \in [n],\\
\frac{\partial g_{2,i}}{\partial z_j} = y_ie^{-2\eta(A^{\top}\vec{x})_i + \eta(A^{\top}\vec{z})_i} \cdot \frac{\eta A^{\top}_{ij} S_y - \eta \sum_{t} A^{\top}_{tj}x_t e^{-2\eta(A^{\top}\vec{x})_t + \eta(A^{\top}\vec{z})_t}}{S_y^2} \textrm { for all }i\in [m], j \in [n],\\
\frac{\partial g_{2,i}}{\partial w_j} = 0 \textrm { for any }i,j \in [m],\\
\frac{\partial g_{3,i}}{\partial x_i} = 1 \textrm{ for all }i\in [n] \textrm{ and zero all the other partial derivatives of }g_{3,i},\\
\frac{\partial g_{4,i}}{\partial y_i} = 1 \textrm{ for all }i\in [m] \textrm{ and zero all the other partial derivatives of }g_{4,i}.
\end{array}
\end{equation}
\newpage
\subsection{Equations computed at point $(\vec{x}^*,\vec{y}^*,\vec{x}^*,\vec{y}^*)$}
Set $S_x = \sum_{t=1}^n x_t^{*} e^{\eta(A\vec{y}^*)_t}$, $S_y = \sum_{t=1}^m y_t^{*} e^{-\eta(A^{\top}\vec{x}^*)_t}$ and let $i,j$ be arbitrary indexes ($g_{1,i}$ captures the $i$-th coordinate of function $g_1$ etc). Assume $v = \vec{x}^{* \top} A \vec{y}^*$, it is not hard to see that $(A^{\top}\vec{x}^*)_i = (A\vec{y}^*)_j = v$ for all $i \in \textrm{Supp}(\vec{x}^*), j \in \textrm{Supp}(\vec{y}^*)$ and $S_x = e^{\eta v}, S_y = e^{-\eta v}$. We get that:
\begin{equation}\label{eq:Jacobian2}
\begin{array}{llll}
\frac{\partial g_{1,i}}{\partial x_i} = 1-x^*_i \textrm{ for all }i\in \textrm{Supp}(\vec{x}^*),\\
\frac{\partial g_{1,i}}{\partial x_i} = \frac{e^{\eta(A\vec{y}^*)_i}}{e^{\eta v}} \textrm{ for all }i\notin  \textrm{Supp}(\vec{x}^*),\\
\frac{\partial g_{1,i}}{\partial x_j} = -x^*_i \textrm { for all }i, j \in \textrm{Supp}(\vec{x}^*) \textrm{ and }j\neq i,\\
\frac{\partial g_{1,i}}{\partial x_j} = 0 \textrm { for all }i\notin \textrm{Supp}(\vec{x}^*), j \in [n] \textrm{ and }j\neq i,\\
\frac{\partial g_{1,i}}{\partial y_j} = x_i^* (2\eta A_{ij} - 2\eta v)  \textrm { for all }i\in \textrm{Supp}(\vec{x}^*), j \in \textrm{Supp}(\vec{y}^*),\\
\frac{\partial g_{1,i}}{\partial y_j} = 0 \textrm { for all }i\notin \textrm{Supp}(\vec{x}^*), j \in [m],\\
\frac{\partial g_{1,i}}{\partial z_j} = 0 \textrm { for all }i,j \in [n],\\
\frac{\partial g_{1,i}}{\partial w_j} = x_i^* (-\eta A_{ij} + \eta v ) \textrm { for all }i\in \textrm{Supp}(\vec{x}^*),j \in \textrm{Supp}(\vec{y}^*),\\
\frac{\partial g_{1,i}}{\partial w_j} = 0 \textrm { for all }i\notin \textrm{Supp}(\vec{x}^*),j \in [m],\\
\frac{\partial g_{2,i}}{\partial y_i} = 1-y^*_i \textrm{ for all }i\in \textrm{Supp}(\vec{y}^*),\\
\frac{\partial g_{2,i}}{\partial y_i} = \frac{e^{-\eta(A\vec{x}^*)_i}}{e^{-\eta v}} \textrm{ for all }i\notin  \textrm{Supp}(\vec{y}^*),\\
\frac{\partial g_{2,i}}{\partial y_j} = -y^*_i \textrm { for all }i, j \in \textrm{Supp}(\vec{y}^*) \textrm{ and }j\neq i,\\
\frac{\partial g_{2,i}}{\partial y_j} = 0 \textrm { for all }i\notin \textrm{Supp}(\vec{y}^*), j \in [m] \textrm{ and }j\neq i,\\
\frac{\partial g_{2,i}}{\partial x_j} = y_i^* (-2\eta A_{ij}^{\top} + 2\eta v)  \textrm { for all }i\in \textrm{Supp}(\vec{y}^*), j \in \textrm{Supp}(\vec{x}^*),\\
\frac{\partial g_{2,i}}{\partial x_j} = 0 \textrm { for all }i\notin \textrm{Supp}(\vec{y}^*), j \in [n],\\
\frac{\partial g_{2,i}}{\partial z_j} = y_i^* (\eta A^{\top}_{ij} - \eta v) \textrm { for all }i \in \textrm{Supp}(\vec{y}^*), j \in \textrm{Supp}(\vec{x}^*),\\
\frac{\partial g_{2,i}}{\partial z_j} = 0 \textrm { for all }i\notin \textrm{Supp}(\vec{y}^*), j \in [n],\\
\frac{\partial g_{2,i}}{\partial w_j} = 0 \textrm { for any }i,j \in [m],\\
\frac{\partial g_{3,i}}{\partial x_i} = 1 \textrm{ for all }i\in [n] \textrm{ and zero all the other partial derivatives of }g_{3,i},\\
\frac{\partial g_{4,i}}{\partial y_i} = 1 \textrm{ for all }i\in [m] \textrm{ and zero all the other partial derivatives of }g_{4,i}.
\end{array}
\end{equation}

\section{Missing claims and proofs}
Lemma \ref{lem:sameorder} shows that the change between next and current iterate in both OMWU algorithms (classic and linear variant) is of order $O(\eta)$ and that the difference between the next iterate of both algorithms is $O(\eta^2)$.
\begin{lemma}\label{lem:sameorder} Let $\vec{x} \in \Delta_n$ be the vector of the max player, $\vec{w},\vec{z} \in \Delta_m$ and suppose $\vec{x}', \vec{x}''$ are the next iterates of OMWU and its linear variant with current vector $\vec{x}$ and vectors $\vec{w},\vec{z}$ of the min player. It holds that
\[\norm[1]{\vec{x}' - \vec{x}''} \textrm{ is }O(\eta^2), \textrm{and }\norm[1]{\vec{x}' - \vec{x}}, \norm[1]{\vec{x}'' - \vec{x}} \textrm{ are }O(\eta).\] Analogously, it holds for vector $\vec{y} \in \Delta_m$ of the min player and its next iterates.
\end{lemma}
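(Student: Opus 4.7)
The plan is to reduce both updates to a common form and then Taylor-expand in $\eta$, isolating the order at which the two updates first disagree. Write $c_i := 2(A\vec{w})_i - (A\vec{z})_i$ (so that $c_i$ depends only on $A$, $\vec{w}$, $\vec{z}$, not on $\eta$), and let $\bar{c} := \sum_j x_j c_j$. Since $\vec{w},\vec{z} \in \Delta_m$, we have $|c_i| \leq 3\max_{i,j}|A_{ij}| =: C$, a constant independent of $\eta$. Then OMWU and its linear variant can be written as
\[
x_i' \;=\; x_i\,\frac{e^{\eta c_i}}{\sum_j x_j e^{\eta c_j}}, \qquad x_i'' \;=\; x_i\,\frac{1+\eta c_i}{1+\eta\bar{c}}.
\]

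First I would handle $\norm[1]{\vec{x}'-\vec{x}}$ and $\norm[1]{\vec{x}''-\vec{x}}$, both $O(\eta)$. Using $e^{\eta c_j} = 1+\eta c_j + O(\eta^2)$ (where the hidden constant depends only on $C$), the denominator of the OMWU update becomes $1 + \eta\bar{c} + O(\eta^2)$, which for sufficiently small $\eta$ is bounded and bounded away from zero. A direct computation gives
\[
x_i' - x_i \;=\; x_i\,\frac{e^{\eta c_i}-\sum_j x_j e^{\eta c_j}}{\sum_j x_j e^{\eta c_j}} \;=\; \eta\,x_i(c_i-\bar{c}) + O(\eta^2),
\]
and analogously $x_i''-x_i = \eta x_i(c_i-\bar{c})/(1+\eta\bar{c}) = \eta x_i(c_i-\bar{c}) + O(\eta^2)$. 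Summing $|x_i'-x_i|$ (resp.\ $|x_i''-x_i|$) over $i\in[n]$ and using $\sum_i x_i = 1$ and $|c_i-\bar{c}| \leq 2C$ yields the $O(\eta)$ bounds (with a hidden constant depending on $n$ and $A$).

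The main point is $\norm[1]{\vec{x}'-\vec{x}''} = O(\eta^2)$, which I would obtain by expanding one order further and watching the linear-in-$\eta$ terms cancel. Put the two updates over a common denominator:
\[
x_i' - x_i'' \;=\; x_i\,\frac{e^{\eta c_i}(1+\eta\bar{c}) - (1+\eta c_i)\sum_j x_j e^{\eta c_j}}{\bigl(\sum_j x_j e^{\eta c_j}\bigr)(1+\eta\bar{c})}.
\]
The denominator is $1+O(\eta)$. For the numerator, use $e^{\eta c_i} = 1+\eta c_i + \tfrac{1}{2}\eta^2 c_i^2 + O(\eta^3)$ and $\sum_j x_j e^{\eta c_j} = 1+\eta\bar{c} + \tfrac{1}{2}\eta^2\overline{c^2} + O(\eta^3)$, where $\overline{c^2}:=\sum_j x_j c_j^2$. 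Multiplying out, the constant term, the $\eta c_i$ term, and the $\eta\bar{c}$ term cancel, and the cross terms $\eta^2 c_i\bar{c}$ also cancel, leaving
\[
\text{numerator} \;=\; \tfrac{1}{2}\eta^2\bigl(c_i^2-\overline{c^2}\bigr) + O(\eta^3).
\]
Since $|c_i^2-\overline{c^2}|\leq 2C^2$, we get $|x_i'-x_i''| = O(\eta^2)$ coordinatewise, hence $\norm[1]{\vec{x}'-\vec{x}''}=O(\eta^2)$ after summing the $n$ coordinates.

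The same argument applied to the min-player update (replacing $A$ by $-A^\top$ and swapping the roles of $\vec{x}$ with $\vec{y}$, and $\vec{w},\vec{z}$ with $\vec{x}^t,\vec{x}^{t-1}$) gives the analogous bounds for $\vec{y}$. The only real work is the algebraic verification that the $\eta^0$ and $\eta^1$ terms of the numerator vanish; this is not an obstacle so much as a careful bookkeeping step, and it is the essential reason the linear variant is a consistent first-order approximation of OMWU.
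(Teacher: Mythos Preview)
Your proposal is correct and follows essentially the same approach as the paper: both proofs Taylor-expand the exponential in $\eta$ and use that the linear variant is precisely the first-order truncation, so the two updates agree to order $\eta$ and differ only at order $\eta^2$. The paper's version is terser---it simply writes $e^{\eta c_i}=1+\eta c_i\pm O(\eta^2)$ in numerator and denominator and reads off $|x_i'-x_i''|=O(\eta^2)x_i$, then handles $|x_i-x_i''|$ directly and gets $|x_i-x_i'|$ by the triangle inequality---whereas you carry the expansion one order further and compute the leading $\eta^2$ coefficient $\tfrac12(c_i^2-\overline{c^2})$ explicitly; but the underlying idea is identical. One small remark: since your numerator bound already carries the factor $x_i$, summing over $i$ gives a constant independent of $n$, so the $n$-dependence you mention is not actually needed.
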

\begin{proof} Let $\eta$ be sufficiently small (smaller than maximum in absolute value entry of $A$).
\begin{align*}
|x'_i - x''_i| &= x_i \left |\frac{e^{2\eta (A\vec{w})_i - \eta (A\vec{z})_i}}{\sum_j x_j e^{2\eta (A\vec{w})_j - \eta (A\vec{z})_j}}  -  \frac{1+2\eta (A\vec{w})_i - \eta (A\vec{z})_i}{\sum_j x_j (1+2\eta (A\vec{w})_j - \eta (A\vec{z})_j)}\right|
\\&= x_i \left | \frac{1+2\eta (A\vec{w})_i - \eta (A\vec{z})_i \pm O(\eta^2)}{\sum_j x_j (1+2\eta (A\vec{w})_j - \eta (A\vec{z})_j) \pm O(\eta^2)}  -  \frac{1+2\eta (A\vec{w})_j - \eta (A\vec{z})_j}{\sum_j x_j (1+2\eta (A\vec{w})_j - \eta (A\vec{z})_j)}\right|
\\&\textrm{ which is }O(\eta^2)x_i
\end{align*}
and hence $\norm[1]{\vec{x}'-\vec{x}''}$ is $O(\eta^2)$. Moreover we have that
\begin{align*}
|x_i - x''_i| &= x_i \left |1  -  \frac{1+2\eta (A\vec{w})_i - \eta (A\vec{z})_i}{\sum_j x_j (1+2\eta (A\vec{w})_i - \eta (A\vec{z})_i)}\right|
\\&= x_i \left | \frac{\sum_j x_j(1+2\eta (A\vec{w})_j - \eta (A\vec{z})_j) - (1+2\eta (A\vec{w})_i - \eta (A\vec{z})_i)}{\sum_j x_j (1+2\eta (A\vec{w})_i - \eta (A\vec{z})_i) }  \right|
\\& = x_i \left | \frac{\sum_j x_j(2\eta (A\vec{w})_j - \eta (A\vec{z})_j ) - 2\eta (A\vec{w})_i + \eta (A\vec{z})_i }{\sum_j x_j (1+2\eta (A\vec{w})_i - \eta (A\vec{z})_i) }  \right|
\textrm{ which is }O(\eta) x_i.
\end{align*}
By triangle inequality and the two above proofs we get the third part of the lemma.
\end{proof}
Lemmas \ref{lem:mainsub}, \ref{eq:nolabel} and \ref{lem:ineqfixed} will be used in the proof of Theorem \ref{thm:kl}.
\begin{lemma}\label{lem:mainsub} Let $(\vec{x}^t,\vec{y}^t)$ be the $t$-th iterate of OMWU dynamics (\ref{eq:expOMWU}). We set \\$R_{\vec{x}}^t := \sum_ix^t_i\left(2(A\vec{y}^t)_i-(A\vec{y}^{t-1})_i-2\vec{x}^{t\;\top}A\vec{y}^t+\vec{x}^{t\;\top}A\vec{y}^{t-1}\right)^2$ and \\ $R_{\vec{y}}^t := \sum_iy^t_i\left(2(A^{\top}\vec{x}^t)_i-(A^{\top}\vec{x}^{t-1})_i-2\vec{x}^{t\;\top}A\vec{y}^t+\vec{x}^{t-1\;\top}A\vec{y}^t\right)^2$. For each time step $t \geq 2$ it holds 
\begin{equation}
\begin{array}{cc}
\textrm{$1)$ }\vec{x}^{t-1\;\top}A(2\vec{y}^{t}-\vec{y}^{t-1}) - \vec{x}^{t\;\top}A(2\vec{y}^{t}-\vec{y}^{t-1})  \leq -(1-O(\eta))\eta R_{\vec{x}}^t+O(\eta^2),\\
\textrm{$2)$ }\vec{y}^{t\;\top}A^{\top}(2\vec{x}^{t}-\vec{x}^{t-1}) - \vec{y}^{t-1\;\top}A^{\top}(2\vec{x}^{t}-\vec{x}^{t-1})\leq -(1-O(\eta))\eta R_{\vec{y}}^t+O(\eta^2)
\end{array}
\end{equation}
\end{lemma}
\begin{proof}
We are going to prove the first inequality. Analogously we can prove the second inequality. It suffices to prove 
\[
\vec{x}^{t-1\;\top}A(2\vec{y}^{t}-\vec{y}^{t-1}) - \tilde{\vec{x}}^{t\;\top}A(2\vec{y}^{t}-\vec{y}^{t-1})  \leq -(1-O(\eta))\eta R_{\vec{x}}^t+O(\eta^2),
\]
where iterate $\tilde{\vec{x}}^t$ is the update of $\vec{x}^{t-1}$ using the linear variant of OMWU dynamics. This is true due to Lemma \ref{lem:sameorder}, i.e., because $\norm[1]{\vec{x}^t-\tilde{\vec{x}}^t}$ is $O(\eta^2)$ in distance. Moreover, due to Lemma \ref{lem:sameorder}, i.e., because $\norm[1]{\vec{x}^t-\vec{x}^{t-1}}$ is $O(\eta)$, it suffices to prove that 
\begin{align*}
\vec{x}^{t-1\;\top}&A(2\vec{y}^{t}-\vec{y}^{t-1}) - \tilde{\vec{x}}^{t\;\top}A(2\vec{y}^{t}-\vec{y}^{t-1})  \\&\leq -(1-O(\eta))\eta \sum_ix^{t-1}_i\left(2(A\vec{y}^t)_i-(A\vec{y}^{t-1})_i-2\vec{x}^{t-1\;\top}A\vec{y}^t+\vec{x}^{t-1\;\top}A\vec{y}^{t-1}\right)^2+O(\eta^2)
\end{align*}
Observe now by plugging in the update rule of $\tilde{\vec{x}}^t$ we get
\begin{align*}
&\vec{x}^{t-1\;\top}A(2\vec{y}^{t}-\vec{y}^{t-1}) - \tilde{\vec{x}}^{t\;\top}A(2\vec{y}^{t}-\vec{y}^{t-1}) \\& =  \eta\frac{\sum_{i}x_i^{t-1}(A(2\vec{y}^{t}-\vec{y}^{t-1}))_i (A(2\vec{y}^{t-1}-\vec{y}^{t-2}))_i-  \left[\vec{x}^{t-1\;\top}A(2\vec{y}^{t}-\vec{y}^{t-1})\right]\left[\vec{x}^{t-1 \;\top}A(2\vec{y}^{t-1} - \vec{y}^{t-2})\right]}{1+\eta \vec{x}^{t-1 \;\top}A(2\vec{y}^{t-1} - \vec{y}^{t-2})}\\&= \eta\frac{ \sum_{i}x_i^{t-1}(A(2\vec{y}^{t}-\vec{y}^{t-1}))^2_i -\left[\vec{x}^{t-1\;\top}A(2\vec{y}^{t}-\vec{y}^{t-1})\right]^2}{1+\eta \vec{x}^{t-1 \;\top}A(2\vec{y}^{t-1} - \vec{y}^{t-2})}+O(\eta^2),\\
\end{align*}
where the last equality uses Lemma \ref{lem:sameorder} and first equality is just calculations. Observe that the denominator is of order $1+O(\eta)$, and the numerator is equal to $$-\eta\sum_ix^{t-1}_i\left(2(A\vec{y}^t)_i-(A\vec{y}^{t-1})_i-2\vec{x}^{t-1\;\top}A\vec{y}^t+\vec{x}^{t-1\;\top}A\vec{y}^{t-1}\right)^2$$ due to the variance formula used on the random variable $z$ where $z = \eta (A(2\vec{y}^t-\vec{y}^{t-1}))_i$ with probability $x_i^t.$ The claim follows. 
\end{proof}
\begin{lemma}\label{eq:nolabel} Let $(\vec{x}^t,\vec{y}^t)$ be the $t$-th iterate of OMWU dynamics (\ref{eq:expOMWU}). We set \\$R_{\vec{x}}^t := \sum_ix^t_i\left(2(A\vec{y}^t)_i-(A\vec{y}^{t-1})_i-2\vec{x}^{t\;\top}A\vec{y}^t+\vec{x}^{t\;\top}A\vec{y}^{t-1}\right)^2$ and \\ $R_{\vec{y}}^t := \sum_iy^t_i\left(2(A^{\top}\vec{x}^t)_i-(A^{\top}\vec{x}^{t-1})_i-2\vec{x}^{t\;\top}A\vec{y}^t+\vec{x}^{t-1\;\top}A\vec{y}^t\right)^2$. For each time step $t \geq 2$ it holds
\begin{align*}
&\eta \vec{x}^{t-1 \ \top} A\vec{y}^t  -  \eta \vec{x}^{t \ \top}A\vec{y}^{t-1} 
\leq - (1-O(\eta))\eta^2R_{\vec{x}}^t -(1-O(\eta))\eta^2R_{\vec{y}}^t + O(\eta^3).
\end{align*}
\end{lemma}
\begin{proof}
Summing the two inequalities from Lemma \ref{lem:mainsub}, we get that 
$$\vec{x}^{t-1 \;\top}A\vec{y}^t - \vec{y}^{t-1 \;\top}A^{\top}\vec{x}^t \leq -(1-O(\eta))\eta (R_{\vec{x}}^t+R_{\vec{y}}^t)+O(\eta^2).$$
Multiplying with $\eta$ both sides, the claim follows.
\end{proof}
\begin{lemma} \label{lem:ineqfixed} Let $(\vec{x}^t, \vec{y}^t)$ denote the $t$-th iterate of OMWU dynamics. It holds for $t \geq 2$ that \[\vec{x}^{* \ \top}A(2\vec{y}^t - \vec{y}^{t-1}) \geq \vec{x}^{* \ \top}A\vec{y}^* \textrm{ and } (2\vec{x}^{t \ \top} - \vec{x}^{t-1 \ \top})A\vec{y}^* \leq \vec{x}^{* \ \top}A\vec{y}^*,\]
where $(\vec{x}^*,\vec{y}^*)$ is the optimal solution of the min-max problem.
\end{lemma}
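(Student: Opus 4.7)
The plan is to reduce the inequality to a purely coordinatewise claim about the OMWU iterates: once the duality structure of $(\vec{x}^*,\vec{y}^*)$ is accounted for, only a sign condition on each $2y_j^t - y_j^{t-1}$ is left, and that sign condition can be verified one step at a time as soon as $\eta$ is small enough.

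First I would use the min-max/complementary-slackness characterization of $(\vec{x}^*,\vec{y}^*)$. Write $v=\vec{x}^{*\top}A\vec{y}^*$ and set $a_j := (A^{\top}\vec{x}^*)_j - v$. Since $\vec{x}^*$ is a best response for the max player we have $\vec{x}^{*\top}A\vec{y} \geq v$ for every $\vec{y} \in \Delta_m$; evaluating at $\vec{y} = \vec{e}_j$ gives $a_j \geq 0$ for all $j$, while complementary slackness (the Farkas argument cited in Section~\ref{sec:contraction} via Lemma~C.3 of~\cite{MPP18}) gives $a_j=0$ for every $j \in \textrm{Supp}(\vec{y}^*)$. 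The coordinates of $2\vec{y}^t - \vec{y}^{t-1}$ still sum to $1$, so
\[
\vec{x}^{*\top}A(2\vec{y}^t-\vec{y}^{t-1}) \;=\; v + \sum_{j=1}^{m} (2y_j^t - y_j^{t-1})\,a_j \;=\; v + \sum_{j\notin\textrm{Supp}(\vec{y}^*)} (2y_j^t - y_j^{t-1})\,a_j.
\]
It therefore suffices to prove $2 y_j^t \geq y_j^{t-1}$ for every $j$, because then each term on the right is nonnegative.

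Next I would establish this one-step nonshrinking bound directly from the OMWU rule~(\ref{eq:expOMWU}). For $t \geq 2$,
\[
\frac{y_j^t}{y_j^{t-1}} \;=\; \frac{e^{-2\eta (A^{\top}\vec{x}^{t-1})_j + \eta (A^{\top}\vec{x}^{t-2})_j}}{\sum_k y_k^{t-1}\, e^{-2\eta (A^{\top}\vec{x}^{t-1})_k + \eta (A^{\top}\vec{x}^{t-2})_k}}.
\]
Since $\vec{x}^{t-1},\vec{x}^{t-2}\in\Delta_n$, each $(A^{\top}\vec{x}^s)_k$ lies in $[-C,C]$ for $C := \max_{ij}|A_{ij}|$, so the exponent is contained in $[-3\eta C,\,3\eta C]$. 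This forces the numerator to be $\geq e^{-3\eta C}$ and the denominator to be $\leq e^{3\eta C}$, giving $y_j^t / y_j^{t-1} \geq e^{-6\eta C}$. For $\eta \leq \ln 2/(6C)$, which is compatible with the standing smallness assumption on $\eta$ used throughout the paper, this ratio exceeds $\tfrac{1}{2}$, so $2y_j^t \geq y_j^{t-1}$ and the first inequality follows.

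The second inequality is fully symmetric. Set $b_i := v - (A\vec{y}^*)_i \geq 0$, which vanishes on $\textrm{Supp}(\vec{x}^*)$ by the analogous duality argument; then the same coordinate expansion gives
\[
(2\vec{x}^{t\top} - \vec{x}^{t-1\top})A\vec{y}^* \;=\; v - \sum_{i\notin\textrm{Supp}(\vec{x}^*)} (2 x_i^t - x_i^{t-1})\,b_i.
\]
Applying the same ratio computation to the $\vec{x}$-update of~(\ref{eq:expOMWU}) yields $x_i^t/x_i^{t-1} \geq e^{-6\eta C} \geq \tfrac{1}{2}$ under the same smallness of $\eta$, so every term in the sum is nonnegative and the right-hand side is $\leq v$. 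I do not expect any serious obstacle in executing this plan: the only nontrivial ingredient is the one-step ratio bound, which is essentially the same smallness-of-$\eta$ estimate already used to control $\norm[1]{\vec{x}' - \vec{x}}$ in Lemma~\ref{lem:sameorder}.
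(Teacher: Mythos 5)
Your proposal is correct and follows essentially the same route as the paper: the key step in both is the one-step ratio bound ($y_j^t \geq \tfrac12 y_j^{t-1}$, resp.\ $x_i^t \geq \tfrac12 x_i^{t-1}$ for $\eta$ small), which shows $2\vec{y}^t-\vec{y}^{t-1}$ and $2\vec{x}^t-\vec{x}^{t-1}$ remain in the simplex, after which optimality of $(\vec{x}^*,\vec{y}^*)$ gives the inequality. Your coordinatewise expansion with the slacks $a_j,b_i \geq 0$ is just an unpacked version of the paper's direct appeal to the equilibrium (deviation) property, and your explicit $e^{-6\eta C}$ estimate makes precise the paper's ``$x_i^t \geq (1-O(\eta))x_i^{t-1}$'' claim.
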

\begin{proof}
It is true that $x_i^t \geq (1-O(\eta))x_i^{t-1}$, hence $x_i^t \geq \frac{1}{2}x_i^{t-1}$ for $\eta$ sufficiently small. Therefore $2\vec{x}^t - \vec{x}^{t-1}$ lies in the simplex $\Delta_n$. Hence since $(\vec{x}^*,\vec{y}^*)$ is the optimum (Nash equilibrium) we get that
$(2\vec{x}^{t \ \top} - \vec{x}^{t-1 \ \top})A\vec{y}^* \leq \vec{x}^{* \ \top}A\vec{y}^*$ ($\vec{x}$ is the max player). Similarly the second inequality can be proved.
\end{proof}

\begin{proof}[Proof of Theorem \ref{thm:kl}]
We compute the difference between\\ $D_{KL}((\vec{x}^*,\vec{y}^*) || (\vec{x}^{t+1},\vec{y}^{t+1}))$ and $D_{KL}((\vec{x}^*,\vec{y}^*) || (\vec{x}^{t},\vec{y}^{t}))$
\begin{align*}
&D_{KL}((\vec{x}^*,\vec{y}^*) || (\vec{x}^{t+1},\vec{y}^{t+1})) -  D_{KL}((\vec{x}^*,\vec{y}^*) || (\vec{x}^{t},\vec{y}^{t})) = -\left (\sum_{i}\vec{x}_i^* \ln \frac{x_i^{t+1}}{x_i^{t}} + \sum_{i}y_i^*\ln \frac{y_i^{t+1}}{y_i^{t}} \right )
\\ =& -\left (\sum_{i}\vec{x}_i^* \ln e^{2\eta (A\vec{y}^t)_i-\eta (A\vec{y}^{t-1})_i} + \sum_{i}y_i^*\ln e^{-2\eta(A^{\top}\vec{x}^{t})_i+\eta(A^{\top}\vec{x}^{t-1})_i} \right )
\\&+ \ln \left(\sum_i x_i^{t}e^{2\eta (A\vec{y}^t)_i-\eta (A\vec{y}^{t-1})_i}\right) + \ln \left(\sum_i y_i^{t}e^{-2\eta (A^{\top}\vec{x}^t)_i+\eta (A^{\top}\vec{x}^{t-1})_i}\right)
\\=& {-2\eta \vec{x}^{* \ \top} A \vec{y}^t + \eta \vec{x}^{* \ \top} A \vec{y}^{t-1} + 2\eta \vec{x}^{t \ \top} A \vec{y}^* - \eta \vec{x}^{t-1 \ \top} A \vec{y}^*} 
\\&+ \ln \left(\sum_i x_i^{t}e^{2\eta (A\vec{y}^t)_i-\eta (A\vec{y}^{t-1})_i}\right) + \ln \left(\sum_i y_i^{t}e^{-2\eta (A^{\top}\vec{x}^t)_i+\eta (A^{\top}\vec{x}^{t-1})_i}\right).
\end{align*}
We use Lemma \ref{lem:ineqfixed} and we get that ${-2\eta \vec{x}^{* \ \top} A \vec{y}^t + \eta \vec{x}^{* \ \top} A \vec{y}^{t-1} + 2\eta \vec{x}^{t \ \top} A \vec{y}^* - \eta \vec{x}^{t-1 \ \top} A \vec{y}^*} \leq 0$, therefore the LHS (difference in the KL divergence) is at most

\begin{align*}
\leq & \overbrace{-2\eta \vec{x}^{t \ \top} A \vec{y}^t + \eta \vec{x}^{t \ \top} A \vec{y}^{t-1} + 2\eta \vec{x}^{t \ \top} A \vec{y}^t - \eta \vec{x}^{t-1 \ \top} A \vec{y}^{t} - \eta \vec{x}^{t \ \top} A \vec{y}^{t-1} + \eta \vec{x}^{t-1 \ \top} A \vec{y}^{t}}^{=0}
\\&+ \ln \left(\sum_i x_i^{t}e^{2\eta (A\vec{y}^t)_i-\eta (A\vec{y}^{t-1})_i}\right) + \ln \left(\sum_i y_i^{t}e^{-2\eta (A^{\top}\vec{x}^t)_i+\eta (A^{\top}\vec{x}^{t-1})_i}\right)
\\=& \ln \left(\sum_i x_i^{t}e^{2\eta ((A\vec{y}^t)_i - \vec{x}^{t \ \top}A\vec{y}^t)-\eta ((A\vec{y}^{t-1})_i- \vec{x}^{t \ \top}A\vec{y}^{t-1})}\right) \\&+ \ln \left(\sum_i y_i^{t}e^{-2\eta ((A^{\top}\vec{x}^t)_i- \vec{x}^{t \ \top}A\vec{y}^t)+\eta ((A^{\top}\vec{x}^{t-1})_i- \vec{x}^{t-1 \ \top}A\vec{y}^{t})}\right)- \eta \vec{x}^{t \ \top} A \vec{y}^{t-1} + \eta \vec{x}^{t-1 \ \top} A \vec{y}^{t}
\end{align*}
We furthermore use second order Taylor approximation ($\eta$ is sufficiently small) to the function $e^x$ and we get that previous expression is at most
\begin{align*}
&\leq \ln (\sum_i x_i^{t}\left(1+2\eta ((A\vec{y}^t)_i -\vec{x}^{t \ \top}A\vec{y}^t)-\eta ((A\vec{y}^{t-1})_i -\vec{x}^{t \ \top}A\vec{y}^{t-1})\right)
\\&+ \sum_i x_i^{t} \left((\frac{1}{2} + O(\eta))\eta^2 \left(2 (A\vec{y}^t)_i - 2 \vec{x}^{t \ \top}A\vec{y}^t -(A\vec{y}^{t-1})_i+ \vec{x}^{t \ \top}A\vec{y}^{t-1}\right)^2\right))
\\&+ \ln (\sum_i y_i^{t}\left(1-2\eta ((A^{\top}\vec{x}^t)_i -\vec{x}^{t \ \top}A\vec{y}^t)+\eta ((A^{\top}\vec{x}^{t-1})_i -\vec{x}^{t-1 \ \top}A\vec{y}^{t})\right)
\\&+ \sum_i y_i^{t} \left((\frac{1}{2} + O(\eta))\eta^2 \left(2 (A^{\top}\vec{x}^t)_i - 2 \vec{x}^{t \ \top}A\vec{y}^t -(A^{\top}\vec{x}^{t-1})_i+ \vec{x}^{t-1 \ \top}A\vec{y}^{t}\right)^2\right)) \\& -\eta \vec{x}^{t \ \top} A \vec{y}^{t-1} + \eta \vec{x}^{t-1 \ \top} A \vec{y}^{t}
\end{align*}

Finally, using Taylor approximation on $\log (1+x)$ and Lemma \ref{eq:nolabel} (last inequality) we get the following system:
\begin{align*}
\\&\leq \eta \vec{x}^{t-1 \ \top} A\vec{y}^t -  \eta \vec{x}^{t \ \top}A\vec{y}^{t-1} 
\\&+ \sum_i x_i^{t} \left((\frac{1}{2} + O(\eta))\eta^2 \left(2 (A\vec{y}^t)_i - 2 \vec{x}^{t \ \top}A\vec{y}^t -(A\vec{y}^{t-1})_i+ \vec{x}^{t \ \top}A\vec{y}^{t-1}\right)^2\right)
\\&+ \sum_i y_i^{t} \left((\frac{1}{2} + O(\eta))\eta^2 \left(2 (A^{\top}\vec{x}^t)_i - 2 \vec{x}^{t \ \top}A\vec{y}^t -(A^{\top}\vec{x}^{t-1})_i+ \vec{x}^{t-1 \ \top}A\vec{y}^{t}\right)^2\right)
\\& \leq -\sum_i x_i^{t} \left((\frac{1}{2} - O(\eta))\eta^2 \left(2 (A\vec{y}^t)_i - 2 \vec{x}^{t \ \top}A\vec{y}^t -(A\vec{y}^{t-1})_i+ \vec{x}^{t \ \top}A\vec{y}^{t-1}\right)^2\right)
\\& -\sum_i y_i^{t} \left((\frac{1}{2} - O(\eta))\eta^2 \left(2 (A^{\top}\vec{x}^t)_i - 2 \vec{x}^{t \ \top}A\vec{y}^t -(A^{\top}\vec{x}^{t-1})_i+ \vec{x}^{t-1 \ \top}A\vec{y}^{t}\right)^2\right)+O(\eta^3).
\end{align*}
It is clear that as long as $(\vec{x}^t,\vec{y}^t)$ (and thus $(\vec{x}^{t-1},\vec{y}^{t-1})$ by Lemma~\ref{lem:sameorder}) is not $O({\eta}^{1/3})$-close, from above inequalities/equalities we get \[D_{KL}((\vec{x}^*,\vec{y}^*) || (\vec{x}^{t+1},\vec{y}^{t+1})) - D_{KL}((\vec{x}^*,\vec{y}^*) || (\vec{x}^{t},\vec{y}^{t})) \leq -\Omega(\eta^3),\] meaning that KL divergence decreases by at least a factor of $\eta^3$ and the claim follows.
\end{proof}

\begin{lemma}\label{lem:skewsymmetrizable} Let $D$ be a real diagonal matrix with positive diagonal entries and $S$ be a real skew-symmetric matrix ($S^{\top} = -S$). It holds that $SD$ has eigenvalues with real part zero (i.e., it has only imaginary eigenvalues).
\end{lemma}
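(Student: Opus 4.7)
The plan is to use a similarity transformation to reduce the matrix $SD$ to an honestly skew-symmetric one, for which purely imaginary spectrum is classical. Since $D$ is real diagonal with strictly positive entries, its principal square root $D^{1/2}$ is well-defined, real, diagonal, and invertible. I would then write
\[
SD = D^{-1/2}\bigl(D^{1/2} S D^{1/2}\bigr)D^{1/2},
\]
so that $SD$ is similar to $M := D^{1/2} S D^{1/2}$ and hence shares its spectrum.

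Next I would verify that $M$ is itself real skew-symmetric: since $D^{1/2}$ is symmetric and $S^{\top}=-S$, we have
\[
M^{\top} = (D^{1/2})^{\top} S^{\top} (D^{1/2})^{\top} = D^{1/2}(-S) D^{1/2} = -M.
\]
The final step is the standard fact that any real skew-symmetric matrix has purely imaginary eigenvalues: the matrix $iM$ satisfies $(iM)^{*}= -i M^{\top} = iM$, so $iM$ is Hermitian and has real spectrum, whence $M$ has spectrum in $i\mathbb{R}$.

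Because similarity preserves eigenvalues, the eigenvalues of $SD$ coincide with those of $M$, so they are all purely imaginary, which is precisely the claim. There is no real obstacle here; the only ingredient one must not skip is noting that positivity of the diagonal of $D$ is what makes $D^{1/2}$ real (and invertible), since without this the symmetrization trick would fail.
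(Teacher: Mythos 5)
Your proof is correct, but it takes a different route from the paper's. You conjugate: writing $SD = D^{-1/2}\left(D^{1/2} S D^{1/2}\right)D^{1/2}$, you reduce the claim to the classical fact that the real skew-symmetric matrix $M = D^{1/2} S D^{1/2}$ has purely imaginary spectrum (via $iM$ being Hermitian), and similarity transfers this to $SD$. The paper instead argues directly on an eigenvector: taking $\vec{z}^*$ a left eigenvector of $SD$ with eigenvalue $\lambda$, it computes $\lambda\, \vec{z}^* D^{-1}\vec{z} = \vec{z}^* S \vec{z} = -(\vec{z}^* S \vec{z})^* = -\overline{\lambda}\, \vec{z}^* D^{-1}\vec{z}$ and uses positivity of $D^{-1}$ to cancel the nonzero quadratic form, giving $\lambda = -\overline{\lambda}$. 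Your symmetrization argument is the more standard one and buys slightly more: it exhibits $SD$ as similar to a normal (indeed skew-symmetric) matrix, hence in particular diagonalizable, which the paper's eigenvalue computation does not immediately give. The paper's argument, on the other hand, avoids introducing square roots and the auxiliary Hermitian matrix, working in one short chain of equalities; it also generalizes verbatim to any Hermitian positive definite $D$ (as does yours, with $D^{1/2}$ the positive definite square root). Your observation that positivity of the diagonal is what keeps $D^{1/2}$ real and invertible is exactly the point where the paper instead invokes $\vec{z}^* D^{-1}\vec{z} \neq 0$; both uses of positivity are essential and correct.
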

\begin{proof} Let $\vec{z}^*$ be the conjugate transpose of $\vec{z}$ and $\vec{z}^*$ be a left eigenvector of $SD$ with complex eigenvalue $\lambda$. It holds that
\begin{align*}
\lambda \vec{z}^{*}D^{-1}\vec{z} &=  \vec{z}^{*}SD D^{-1}\vec{z}
\\& = \vec{z}^{*}S\vec{z}
\\& = - (\vec{z}^{*}S\vec{z})^{*} \textrm{ (since }S \textrm{ is skew symmetric)}
\\& = - (\lambda \vec{z}^{*}D^{-1}\vec{z})^{*} \textrm{ (using first and second equalities above)}
\\& = -\overline{\lambda}\vec{z}^{*}D^{-1} \vec{z}.
\end{align*}
Since $D$ has positive diagonal entries, we conclude that $\vec{z}^{*}D^{-1}\vec{z} \neq 0$ (since $\vec{z} \neq \vec{0}$), thus $\lambda = -\overline{\lambda}$ and the claim follows.
\end{proof}

\end{document}